\definecolor{ForestGreen}{rgb}{0.15,0.416,0.18}
\definecolor{EgyptBlue}{rgb}{0.063,0.2,0.65}
\newtheorem{theorem}{Theorem}
\newtheorem{lemma}[theorem]{Lemma}
\newtheorem{thma}{Theorem}[section]
\theoremstyle{definition}
\newtheorem{definition}[theorem]{Definition}
\newtheorem{remark}[theorem]{Remark}
\numberwithin{theorem}{section}
\numberwithin{equation}{section}
\newcommand{\subjclass}[2][2020]{%
  \let\@oldtitle\@title%
  \gdef\@title{\@oldtitle\footnotetext{#1 \emph{MSC Classification:} #2}}%
}
\newcommand{\keywords}[1]{%
  \let\@@oldtitle\@title%
  \gdef\@title{\@@oldtitle\footnotetext{\emph{Keywords:} #1}}%
}
\newcommand{\Z}{\mathbb{Z}}
\newcommand{\N}{\mathbb{N}}
\newcommand{\R}{\mathbb{R}}
\newcommand{\C}{\mathbb{C}}
\newcommand{\cC}{\mathcal{C}}
\newcommand{\cR}{\mathcal{R}}
\newcommand{\K}{\mathbb{K}}
\newcommand{\T}{\mathbb{T}}
\newcommand{\A}{\mathcal{A}}
\newcommand{\M}{\mathcal{M}}
\newcommand{\F}{\mathcal{F}}
\renewcommand{\S}{\mathcal{S}}
\newcommand{\U}{\mathcal{U}}
\newcommand{\V}{\mathcal{V}}
\renewcommand{\O}{\mathcal{O}}
\newcommand{\0}{\mathbf0}
\newcommand{\X}{\mathcal{X}}
\newcommand{\cS}{\mathcal{S}}
\newcommand{\eps}{\varepsilon}
\newcommand{\intd}{\ensuremath{\,\mathrm{d}}}
\DeclareMathOperator{\dom}{dom}
\DeclareMathOperator{\signc}{sc}
\title{Morse decomposition for semi-dynamical systems\\
with an application to  systems of state-dependent\\
 delay differential equations}
\author[1,2,3]{\orcidlink{0000-0003-2217-4960}\,István Balázs}
\author[1,2,3]{\orcidlink{0000-0001-9693-1923}\,Ábel Garab}
\author[4]{\orcidlink{0000-0003-3056-7086}\,Teresa Rauscher}
\date{\today}
\keywords{dynamical systems, global attractor, Morse decomposition, discrete Lyapunov function, delay differential equation, state-dependent delay.}
\subjclass{37B35, 37C70, 37C10, 34K43, 34K25.}
\affil[1]{Bolyai Institute, University of Szeged,\par Aradi vértanúk tere 1, Szeged, H–6720, Hungary}
\affil[2]{HUN-REN--SZTE Analysis and Applications Research Group, \par Bolyai Institute,
University of Szeged}
\affil[3]{National Laboratory for Health Security, University of Szeged, Szeged, Hungary}
\affil[4]{Department of Mathematics, University of Klagenfurt, Universit\"atsstra{\ss}e 65--67, A--9020 Klagenfurt, Austria}
\begin{document}

\maketitle
\begin{abstract}
Understanding the structure of the global attractor is crucial in the field of dynamical systems, where Morse decompositions provide a powerful tool by partitioning the attractor into finitely many invariant Morse sets and gradient-like connecting orbits. Building on Mallet-Paret’s pioneering use of discrete Lyapunov functions for constructing Morse decompositions in delay differential equations, similar approaches have been extended to various delay systems, also including state-dependent delays. In this paper, we develop a unified framework assuming the existence and some properties of a discrete Lyapunov function for a semi-dynamical system on an arbitrary metric space, and construct a Morse decomposition of the global attractor in this general setting. We  demonstrate that our findings generalize previous results; moreover, we apply our theorem to a cyclic system of differential equations with threshold-type state-dependent delay.
\end{abstract}

\section{Introduction}
Under quite natural and frequently met assumptions, semi-dynamical systems -- whether induced by ordinary, delay, or parabolic differential equations, or difference equations -- are dissipative. This yields that their forward dynamics eventually enters a bounded subset of the state space, hence a global attractor $\A$ exists that contains all bounded entire solutions. Consequently, $\A$ encompasses all dynamically relevant objects, such as equilibria, periodic solutions,
as well as homo- or heteroclinic orbits. 

Beyond the mere existence of a global attractor, one is more interested in its internal structure.  A Morse decomposition serves as an effective tool for this task for the following reasons:
it allows disassembling the global attractor into finitely many invariant compact subsets (the
Morse sets) and their connecting orbits; the recurrent dynamics in $\A$ occurs entirely within the Morse sets; outside of them, the dynamics on $\A$ is gradient-like. 

Morse decompositions were introduced by Conley \cite{conley:78} and have been investigated by many scholars since then -- see, e.g.\ \cite{colonius:kliemann:00,kloeden:rasmussen:11} and references therein. To the best of our knowledge, Mallet-Paret \cite{mallet-paret:88-morse} was the first to employ a discrete Lyapunov function (i.e.\ an integer-valued function on the state-space that is nonincreasing along solutions) to construct a Morse decomposition, which he developed for a class of delay differential equations (DDEs) with negative feedback and constant delay 1. This discrete Lyapunov function $V$ is based on counting the number of sign changes on segments of solutions (these segments being elements of the infinite-dimensional state space $C([-1,0],\R)$). The Morse decomposition was constructed by showing and utilizing several properties of $V$.  

Mallet-Paret's ideas have been successfully applied to obtain similar results based on analogously defined discrete Lyapunov functions for delay difference equations \cite{garab:poetzsche:19} and for (systems of) DDEs \cite{polner:02,garab:20}, and, very recently, for scalar state-dependent delay differential equations \mbox{(SDDEs)} \cite{bartha:garab:krisztin:25}. All the proofs consist of the following main steps: 
\begin{enumerate}[label=(\arabic*)]
    \item defining an appropriate discrete Lyapunov function $V$, and verifying several properties of it (which are rather similar to the ones seen in \cite{mallet-paret:88-morse});
    \item defining the Morse sets (that are, roughly speaking, the sets of bounded entire solutions living within level sets of $V$);
    \item proving the existence of the Morse decomposition in a series of lemmas, similarly as done in \cite{mallet-paret:88-morse}.
\end{enumerate}

The aim of this paper is twofold: On the one hand, we define a framework that is sufficiently flexible to unify  all the results mentioned above. More precisely, we study a semi-dynamical system on an arbitrary metric space and assume that part (1) above is already done, that is, a discrete Lyapunov function with suitable properties exists. Then we carry out parts (2) and (3) in this general setting (see \cref{sec:gen-results}), and demonstrate that the results reproduce or refine those in \cite{mallet-paret:88-morse,garab:20,garab:poetzsche:19,bartha:garab:krisztin:25, polner:02} (see \cref{subsec:covers-prev-results}). 

On the other hand, in \cref{subsec:sdde} we study a cyclic system of SDDEs with a threshold-type delay and show that, based on the abstract results in \cref{sec:gen-results} and our recent findings on discrete Lyapunov functions for DDEs with variable delay \cite{balazs:garab:25}, a Morse decomposition of the global attractor can be constructed. 

There are other fields, such as tridiagonal ordinary differential equations \cite{avraham:Sharon:zarai:margaliot:20} and their discretizations \cite{mallet-paret:sell:03}, scalar reaction--diffusion equations \cite{brunovsky:fiedler:86,fiedler:mallet-paret:89}, or nonlinear Cauchy--Riemann equations \cite{van-den-berg:munao:vandervorst:16}, where analogously defined discrete Lyapunov functions are available. 
In addition, general dynamical systems and skew-product semi-flows that admit a discrete Lyapunov function have been studied (see, for example, \cite{terevsvcak:1994} and \cite{yan:zhou:25}, respectively). We hope that the findings obtained in \cref{sec:gen-results} complement these abstract results and further facilitate the construction of new Morse decompositions in these and related settings.
\medskip

In the next section, we introduce the necessary notation and recall some results from topology.

\section{Preliminaries}\label{sec:prelim}

\subsubsection*{Basic notation and definitions}
Let $\C,\R,\Z,\N_0$, and $\N$ denote the set of complex, real, integer, nonnegative integer, and positive integer numbers, respectively,  $(\X,d)$ be a metric space,
$$\T\in\{\R,\Z\},\qquad\T_+=\T\cap[0,\infty),\qquad\T_-=\T\cap(-\infty,0],$$
and let $S\colon \T_+\times \X \to \X$ be a semi-dynamical system having a global attractor $\A$, defined as follows.

\begin{definition}[{\cite[p.\ 1]{hale:88}}]
    The global attractor $\A$ is the maximal compact invariant set containing the limit set
    $$\omega(U)=\bigcap_{t\in\T}\overline{\bigcup_{s\ge t}S(s,U)}$$
    of all bounded sets $U\subseteq \X$.
\end{definition}

\begin{definition}
    A function $\varphi\colon\T\to \X$ is an entire solution of $S$ if
    $$\varphi(s+t)=S(t,\varphi(s))\quad\text{for all }s\in\T\text{ and }t\in\T_+.$$
\end{definition}

\begin{remark}
    The global attractor $\A$ of $S$ is the union of the trajectories of the bounded entire solutions of $S$.
\end{remark}

For convenience, we will use the following equivalent definition for $\omega$-limit sets of singletons. Besides that, let us define $\alpha$-limit sets of entire solutions in a similar fashion.

\begin{definition}
    Let $\xi\in \X$, then
    \begin{align*}
        \omega(\xi)&=\{\eta \in \X \mid  \exists(t_n)_{n=1}^\infty: S(t_n,\xi)\to \eta,\ t_n\to\infty,\ n\to\infty\}.
        \intertext{If $\varphi$ is an entire solution, then}
        \alpha(\varphi)&=\{\eta \in \X \mid \exists(t_n)_{n=1}^\infty: \varphi(t_n)\to \eta,\ t_n\to-\infty,\ n\to\infty\}.
    \end{align*}
\end{definition}

We proceed to define the Morse decomposition, which is the main object of the paper.

\begin{definition}
	A Morse decomposition of the global attractor $\A$ of the semi-dynamical system $S$ is a finite ordered collection $\M_0 \prec \M_1 \prec \dots \prec \M_m$ of nonempty, compact, invariant, pairwise disjoint subsets of $\A$ such that for any $\xi \in \A$ and any bounded solution $\varphi\colon \T \to \X$ of the semi-dynamical system $S$ through $\xi$, there exist $N,K$ in $\left\lbrace 0, \ldots, m\right\rbrace$ with $N \leq K$ and
	\begin{enumerate}[label=(M$_\arabic*$), leftmargin=*]
		\item \label{m1} $\alpha(\varphi) \subseteq \M_K$, $\omega(\xi) \subseteq \M_N$,
		\item \label{m2} $N=K$ implies that $\varphi(t)  \in \M_N$ for all $t \in \T$.
	\end{enumerate} 
We will refer to these properties as Morse properties.
\end{definition}

Although the Morse sets are -- by definition -- nonempty, for brevity, we take the liberty to occasionally list some sets $\M_0,\dots,\M_m$ as a Morse decomposition with some (but not all) of them potentially empty.

\subsubsection*{Ascoli's theorem}

The following theorem is a form of Ascoli's theorem for metric spaces that we will use several times in the following proofs.

\begin{thma}[{\cite[Theorem 47.1]{munkres:14}} Ascoli's theorem] \label{ascoli}
    Let $\U$ be a space, and let $(\V,d_\V)$ be a metric space. Give $\cC(\U,\V)$ the topology of compact convergence; let $\F$ be a subset of $\cC(\U,\V)$.
    \begin{enumerate}[label=(\alph*)]
        \item If $\F$ is equicontinuous under $d_\V$ and the set
        $$\F_a=\{f(a)\ |\ f\in\F\}$$
        has compact closure for each $a\in\U$, then $\F$ is contained in a compact subspace of $\cC(\U,\V)$.
        \item The converse holds if $\U$ is locally compact Hausdorff.
    \end{enumerate}
\end{thma}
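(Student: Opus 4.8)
The plan is to run the classical argument (essentially Munkres's): funnel both parts through the topology of pointwise convergence, use Tychonoff's theorem for compactness there, and use equicontinuity as the bridge back to the topology of compact convergence. Two standard ``infrastructure'' facts will be needed as stepping stones: (i) on an equicontinuous subset of $\V^{\U}$ the topology of pointwise convergence agrees with the topology of compact convergence, and (ii) equicontinuity is inherited by the pointwise closure, so that a pointwise limit of an equicontinuous family of continuous maps is again continuous.

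For part (a), granting (i)--(ii) the argument is short and I would proceed as follows. Set $Y_a=\overline{\F_a}\subseteq\V$, which is compact by hypothesis, and let $P=\prod_{a\in\U}Y_a$, which is compact by Tychonoff's theorem. Identifying $f\in\F$ with $(f(a))_{a\in\U}$ embeds $\F$ into $P$; let $G$ be the closure of $\F$ in $P$, equivalently the pointwise closure of $\F$ in $\V^{\U}$, which is compact in the pointwise topology. By (ii), $G$ is equicontinuous and $G\subseteq\cC(\U,\V)$; by (i), the pointwise and compact-convergence topologies coincide on $G$. Hence $G$ is a compact subspace of $\cC(\U,\V)$ endowed with the topology of compact convergence, and $\F\subseteq G$, as required.

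For part (b), assume $\U$ is locally compact Hausdorff and $\F\subseteq G$ with $G\subseteq\cC(\U,\V)$ compact. Compactness of $\overline{\F_a}$ is immediate: evaluation $f\mapsto f(a)$ is continuous on $\cC(\U,\V)$ for the topology of compact convergence (as $\{a\}$ is compact), so $\F_a$ is contained in the compact image of $G$. For equicontinuity I would invoke the fact --- here is where local compactness and Hausdorffness genuinely enter --- that the evaluation map $E\colon\U\times\cC(\U,\V)\to\V$, $(x,f)\mapsto f(x)$, is continuous. Then, fixing $a\in\U$ and $\eps>0$, for each $g\in G$ continuity of $E$ at $(a,g)$ gives an open neighborhood $W_g$ of $a$ and an open neighborhood $O_g$ of $g$ in $\cC(\U,\V)$ with $d_\V(f(x),g(a))<\eps/2$ whenever $x\in W_g$ and $f\in O_g$; compactness of $G$ yields a finite subcover $O_{g_1},\dots,O_{g_k}$, and $W=\bigcap_i W_{g_i}$ is a neighborhood of $a$ on which, by the triangle inequality, $d_\V(f(x),f(a))<\eps$ for every $f\in G$ --- that is, $G$, hence $\F$, is equicontinuous at $a$.

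The main obstacle is not the bookkeeping above but the two infrastructure lemmas (i), (ii) and the continuity of $E$. Proving (i) is an $\eps/3$-argument: cover a given compact $C\subseteq\U$ by finitely many neighborhoods on which every member of the family oscillates by less than $\eps/3$, then compare a pointwise-close function to a fixed one at the finitely many centers. Proving (ii) is a routine limit passage. The continuity of $E$ used in (b) is where local compactness is essential: around a point $x$ one chooses a compact neighborhood, uses uniform control of the candidate limit there, and thereby converts a subbasic open set of the compact-convergence topology into the neighborhood witnessing continuity of $E$. I would either cite these three facts or prove them and then assemble them exactly as above.
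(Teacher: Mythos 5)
The paper does not prove this statement at all --- it is imported verbatim as a black box from Munkres (the label ``Theorem~A'' and the bracketed citation make this explicit), so there is no proof in the paper against which to compare yours. That said, your reconstruction is correct and follows exactly the argument Munkres gives: part~(a) via embedding into the Tychonoff product $\prod_a \overline{\F_a}$, taking the pointwise closure $G$, and invoking the two standard lemmas that (i)~equicontinuity makes the pointwise and compact-convergence topologies agree and (ii)~equicontinuity passes to pointwise closures; part~(b) via continuity of evaluation maps, with local compactness and Hausdorffness entering precisely through the joint continuity of $E\colon \U\times\cC(\U,\V)\to\V$, from which equicontinuity of the compact set $G$ follows by a finite subcover and a triangle-inequality estimate. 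The structure, the auxiliary lemmas you flag as needed, and the role of each hypothesis all match the textbook source. Since the paper simply cites the result, the only sensible verdict is that your proof is correct and is the same proof the paper silently relies on.
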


Since we will work in metric spaces, the conclusion of Ascoli's theorem (part (a)) has the consequence that every infinite subset of $\F$ has a limit point (cf.\ \cite[Theorem~28.2]{munkres:14}).

\section{Abstract results}\label{sec:gen-results}

Using the notations introduced above, we assume that the semi-dynamical system $S$ has global attractor $\A$, and the following hypotheses hold throughout the paper. 

\begin{enumerate}[label=(H$_\arabic*$), leftmargin=*]
\item \label{cond-o-exists} The set $\O\subseteq \A$ is compact, and invariant in the sense that $S(t,\O)=\O$ for all $t\in \T_+$.
\item \label{cond-x-tilde} The set $\widetilde{\X}\subseteq \X$ is positively invariant (i.e.\ $S(\widetilde{\X},t)\subseteq \widetilde{\X}$), moreover, $\A \subseteq \widetilde{\X}$.
\item \label{cond-finer} A metric $\widetilde{d}$ is defined on $\widetilde{\X}$ such that 
 for any $\xi\in \widetilde{\X}$ and  $(\xi_n)_{n=1}^\infty \subseteq \widetilde{\X}$,
$$\lim_{n\to\infty}\widetilde{d}(\xi_n,\xi)=0\quad\text{implies}\quad\lim_{n\to\infty}d(\xi_n,\xi)=0.$$
\item  \label{att_d} Let $(\xi_n)_{n =1}^\infty$ be a sequence in $\A$ and $\xi \in \A$ with $\lim_{n \to \infty}d(\xi_n,\xi)=0$. Then there exists a subsequence $(\xi_{n_k})_{k=1}^{\infty}$ such that $\lim_{k\to\infty}\widetilde{d}(\xi_{n_k},\xi)=0$.
\end{enumerate}

Let us define our most important tool, the discrete Lyapunov function $$V\colon \dom(V) \to \N_0 \cup \lbrace \infty\rbrace,$$
where $\X\setminus \O \subseteq \dom(V)\subseteq \X $, and suppose that it has the following properties. 

\begin{enumerate}[label=(V$_\arabic*$), leftmargin=*]
	\item \label{v_usc} For every sequence $(\xi_n)_{n =1}^{\infty}\subseteq \X \setminus\O$  and $\xi \in \X \setminus\O$ with $\lim_{n \to \infty}d(\xi, \xi_n)=0$ it holds that 
	\begin{equation*}
		V(\xi) \leq \liminf_{n \to \infty} V(\xi_n).
	\end{equation*} 
	\item \label{v_cont} There exists a set $\cR \subseteq \widetilde{\X} \setminus \O$ such that for every sequence $(\xi_n)_{n =1}^{\infty}\subseteq \widetilde{\X} \setminus\O$  and $\xi \in \cR$ with $\lim_{n \to \infty} \widetilde{d}(\xi, \xi_n) = 0$ it holds that
	\begin{equation*}
		V(\xi) = \lim_{n \to \infty} V(\xi_n).
	\end{equation*} 
	\item \label{v_dec} For all $t \in \T_+$ and $\xi\in \X\setminus \O$ it holds that $V(S(t,\xi)) \leq V(\xi)$.
	\item \label{v_r} There exists $T \in \T_+$ such that for any $t \in \T_+$ with $t \geq T$, if $V(S(t,\xi))=V(\xi)$, then $S(t,\xi) \in \cR$.
	\item \label{v_n*} If $\O$ is nonempty, then there  exist $N^\ast\in \N_0$ and an open neighborhood $U \subseteq \X$ of $\O$ such that for any entire solution $\varphi\colon \T \to \X\setminus \O$ of the semi-dynamical system $S$ the following statements hold:
	\begin{itemize}
		\item[(a)] If there exists $t_0$ such that $\varphi(t) \in \overline{U} \cap \A$ for all $t\le t_0$, then $V(\varphi(t)) \leq N^*$ for all $t \in \T$.
		\item[(b)] If there exists $t_1$ such that $\varphi(t) \in \overline{U} \cap \A$ for all $t\ge t_1$, then $V(\varphi(t)) \geq N^*$ for all $t \in \T$.
	\end{itemize} 
\end{enumerate}

Property \ref{v_dec} explains the name ``discrete Lyapunov function''.

We note that in the definition of $V$, the set $\N_0$ could be replaced by any countable subset of isolated real numbers that is bounded below. For simplicity of notation, we work with the definition given above.

Now, let us define the sets that will form a Morse decomposition.
	If $\O\neq \emptyset$ ($\O=\emptyset$),  for   $N \in \N_0 \setminus \left\lbrace N^* \right\rbrace $ (for $N \in \N_0$) define the set
	\begin{align*}
		\S_N\coloneq \Big\lbrace \xi \in \A\setminus \O &: \text{there exists a bounded entire solution }  \varphi\colon \T \to \X \text{ through } \xi   \\
		&\mathrel{\hphantom{:}} \text{such that } V(\varphi(t)) =N \text{ for all } t \in \T, \text{ and }(\alpha(\varphi) \cup \omega (\xi))\cap\O=\emptyset\Big\rbrace,
        \end{align*}
	 and for a fixed $N_0>N^*$ ($N_0\in \N_0$), let us also define the set
    \begin{align*}
	\S_{N_0}^+\coloneq\Big\lbrace \xi \in \A \setminus \O &: \text{there exists a bounded entire solution }  \varphi\colon \T \to \X \text{ through } \xi  \\
	&\mathrel{\hphantom{:}}\text{such that } V(\varphi(t)) \geq N_0 \text{ for all } t \in \T, \text{ and } (\alpha(\varphi) \cup \omega(\xi))\cap\O=\emptyset \Big\rbrace.
	\end{align*}
    Furthermore, if $\O\neq \emptyset$, then let
    \begin{align*}
    \S_{N^*}\coloneq \Big\lbrace \xi \in \A\setminus \O &: \text{there exists a bounded entire solution }  \varphi\colon \T \to \X\\ 
	&\mathrel{\hphantom{:}} \text{though $\xi$ such that } V(\varphi(t)) =N^* \text{ for all } t \in \T\Big\rbrace \cup \O.
    \end{align*}
Finally, for $n\le N_0$ we define the Morse sets as
    $$\M_n\coloneq\begin{cases}
        \S_n,&\text{if }n<N_0,\\
        \S_{N_0}^+&\text{if }n=N_0.\\
    \end{cases}$$

Our first main theorem is the following.

\begin{theorem}\label{th:morse}
	If the semi-dynamical system $S$ has a global attractor $\A$ and hypotheses \emph{\ref{cond-o-exists}--\ref{att_d}} and \emph{\ref{v_usc}--\ref{v_n*}} are fulfilled, then the sets $\M_0,\dots ,\M_{N_0}$  form a Morse decomposition of the global attractor $\A$ for any $N_0>N^\ast$ $(N_0\in \N_0)$ in case of $\O\neq \emptyset$ $(\O=\emptyset)$. 
\end{theorem}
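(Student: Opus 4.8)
The plan is to verify the three structural requirements of a Morse decomposition in turn: (i) each $\M_n$ is nonempty, compact, and invariant; (ii) the $\M_n$ are pairwise disjoint; and (iii) for every $\xi\in\A$ and every bounded entire solution $\varphi$ through $\xi$, there exist $N\le K$ in $\{0,\dots,N_0\}$ with $\alpha(\varphi)\subseteq\M_K$, $\omega(\xi)\subseteq\M_N$, and $N=K$ forcing $\varphi(\T)\subseteq\M_N$. Throughout, the key objects are, for a bounded entire solution $\varphi$, the two ``boundary values'' of the discrete Lyapunov function obtained by monotonicity \ref{v_dec}: since $t\mapsto V(\varphi(t))$ is nonincreasing on any interval where $\varphi$ avoids $\O$, the limits $V(\varphi(-\infty))$ and $V(\varphi(+\infty))$ exist in $\N_0\cup\{\infty\}$, and I expect $K$ to be determined by the behaviour of $V$ along $\varphi$ as $t\to-\infty$ and $N$ by the behaviour as $t\to+\infty$ (capped at $N_0$ for the top set $\S_{N_0}^+$, and with $\O$-related orbits landing in $\M_{N^*}$).

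First I would establish the \emph{invariance} of the $\M_n$: this is essentially immediate from the definition, since the defining property of $\S_N$ (resp.\ $\S_{N_0}^+$, $\S_{N^*}$) is phrased in terms of an entire solution through $\xi$ on which $V$ is constantly $N$ (resp.\ $\ge N_0$, $\ge N^*$) and which misses $\O$; translating such a solution in time and using \ref{cond-o-exists} for the $\O$-part gives $S(t,\M_n)=\M_n$ for $t\in\T_+$. \emph{Compactness}: since $\A$ is compact and $\O$ is compact, it suffices to show each $\M_n$ is closed in $\A$. Take $\xi_k\in\M_n$ with $\xi_k\to\xi$ in $d$; pick entire solutions $\varphi_k$ through $\xi_k$ with $V(\varphi_k(t))\equiv n$. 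Using that the $\varphi_k$ take values in the compact set $\A$, a diagonal argument together with Ascoli's theorem (Theorem~\ref{ascoli}(a) applied on each compact time-interval, using that $S$ maps $\A$ into $\A$ and the local compactness of $\T$) extracts a subsequence converging, uniformly on compact time-sets, to an entire solution $\varphi$ through $\xi$. To pass the constraint $V(\varphi_k(t))\equiv n$ to the limit one uses semicontinuity \ref{v_usc} for the inequality $V(\varphi(t))\le n$ and, crucially, hypothesis \ref{att_d} to upgrade $d$-convergence inside $\A$ to $\widetilde d$-convergence along a further subsequence, so that the continuity statement \ref{v_cont} can be invoked on the set $\cR$; combining this with \ref{v_r} (which forces $\varphi(t)\in\cR$ once $V$ has stabilized, i.e.\ after time $T$ and hence, by shifting, for all $t$ on an entire solution with constant $V$) yields $V(\varphi(t))\equiv n$. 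The $\alpha$- and $\omega$-limit conditions and disjointness from $\O$ also survive the limit because limit sets of entire solutions in $\A$ depend upper-semicontinuously on the solution and $\O$ is closed; for $n=N_0$ one argues analogously with $\ge N_0$, and for $n=N^*$ the $\O$-component is handled by \ref{cond-o-exists}. \emph{Nonemptiness}: for $n<N_0$ with $n\ne N^*$ one must produce at least one bounded entire solution with $V\equiv n$ avoiding $\O$; this is the one place where the full strength of \ref{v_n*} and a connecting-orbit / backward-continuation argument is needed — however, as the remark after the definition allows some listed sets to be empty, I expect the paper to either invoke such an argument or simply permit $\M_n=\emptyset$ here, so I would treat nonemptiness as the least essential point.

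\emph{Disjointness} is straightforward: if $\xi\in\S_N\cap\S_{N'}$ with $N\ne N'$, two entire solutions through $\xi$ would give $V(\xi)=N$ and $V(\xi)=N'$ at the common point $\xi$, a contradiction (here one uses that $V$ is single-valued on $\X\setminus\O$, and that $\O$ is disjoint from every $\S_N$ and from $\S_{N_0}^+$ by the $(\alpha\cup\omega)\cap\O=\emptyset$ clause, together with \ref{v_n*} or invariance of $\O$ to separate $\O$ from the others); similarly $\S_N\cap\S_{N_0}^+=\emptyset$ for $N<N_0$ since $V\equiv N<N_0$ contradicts $V\ge N_0$, and $\S_{N^*}$ meets no other $\S_N$ because on $\S_{N^*}\setminus\O$ one has $V\equiv N^*$ while $N\ne N^*$. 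The heart of the proof is \emph{property (iii)}. Given a bounded entire solution $\varphi$ through $\xi\in\A$, set $k=\lim_{t\to-\infty}V(\varphi(t))$ and $\ell=\lim_{t\to+\infty}V(\varphi(t))$ whenever $\varphi$ stays away from $\O$; by \ref{v_dec} we have $\ell\le k$. The goal is to show $\alpha(\varphi)\subseteq\M_{\min(k,N_0)}$ and $\omega(\xi)\subseteq\M_{\min(\ell,N_0)}$ — so $K=\min(k,N_0)$, $N=\min(\ell,N_0)$. For the $\alpha$-limit: any $\eta\in\alpha(\varphi)$ is a limit $\varphi(t_j)\to\eta$ with $t_j\to-\infty$; since $\alpha(\varphi)\subseteq\A$ is compact and invariant, there is an entire solution $\psi$ through $\eta$ with trajectory in $\alpha(\varphi)$, and one shows $V\equiv k$ on it by the same semicontinuity-plus-\ref{v_cont}-plus-\ref{v_r} argument as in the compactness step (using \ref{att_d} again to get $\widetilde d$-convergence and the fact that $t\mapsto V(\varphi(t))$ has already stabilized at level $k$ for very negative times, so all relevant points lie in $\cR$); thus $\eta\in\S_k$, hence $\eta\in\M_k$ when $k<N_0$, $k\ne N^*$, or $\eta\in\M_{N_0}=\S_{N_0}^+$ when $k\ge N_0$. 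The case distinction involving $N^*$ and $\O$ is where \ref{v_n*} does its work: if $\alpha(\varphi)$ or $\omega(\xi)$ intersects $\O$, or if the solution approaches $\O$, then by \ref{v_n*}(a)/(b) the level is pinned to $N^*$ and the limit set lands in $\M_{N^*}=\S_{N^*}$ (which contains $\O$). The $\omega$-limit case is the mirror image. Finally, if $N=K$, i.e.\ $\min(k,N_0)=\min(\ell,N_0)$, then since $\ell\le k$ this forces $V(\varphi(t))$ to be constant (equal to that common value, or $\ge N_0$ throughout) on all of $\T$, hence $\varphi(t)\in\S_N$ (or $\S_{N_0}^+$, or $\S_{N^*}$) for every $t$, giving \ref{m2}. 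I expect the main obstacle to be the limiting arguments: reliably extracting convergent subsequences of entire solutions valued in $\A$ via Ascoli's theorem and, more delicately, orchestrating the interplay of \ref{v_usc}, \ref{v_cont}, \ref{v_r}, and \ref{att_d} so that the value of $V$ actually passes to the limit as an equality rather than merely an inequality — in particular, verifying that the limiting solution genuinely lies in $\cR$ at every time so that \ref{v_cont} applies, which requires carefully tracking where the monotone quantity $V\circ\varphi$ has already ``frozen''.
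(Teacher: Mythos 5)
Your overall outline closely mirrors the paper's: verify invariance and disjointness directly from the definitions, prove closedness via an Ascoli argument that upgrades $d$-convergence to $\widetilde d$-convergence through \ref{att_d} so that \ref{v_cont} and \ref{v_r} yield $V$ constant along the limit solution, and establish the Morse properties by tracking the monotone limits $K=\lim_{t\to-\infty}V(\varphi(t))$ and $N=\lim_{t\to+\infty}V(\varphi(t))$ (capped at $N_0$). You also correctly anticipate that nonemptiness is moot, since the paper explicitly permits empty sets in the listing. However, there are two genuine gaps that the paper closes with dedicated lemmas, and your replacements for them do not hold up.

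First, for the Morse property you need to show that $\omega(\xi)$ (and likewise $\alpha(\varphi)$) is \emph{either} entirely inside $\O$ \emph{or} entirely outside $\O$ whenever the corresponding limiting value of $V$ is $\ne N^*$; only then does compactness of $\omega(\xi)$ give $(\alpha(\psi)\cup\omega(\eta))\cap\O=\emptyset$ for the auxiliary solution $\psi\subseteq\omega(\xi)$ through $\eta$, which is required to conclude $\eta\in\S_N$. Your proposed shortcut --- ``if $\omega(\xi)$ intersects $\O$ then \ref{v_n*} pins $V$ to $N^*$'' --- does not follow directly: hypothesis \ref{v_n*} requires the trajectory to stay in $\overline U\cap\A$ on a whole half-line, not merely that the limit set touches $\O$. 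The paper's \cref{l2} proves this dichotomy by a nontrivial argument: it extracts, via Ascoli, two auxiliary entire solutions $\psi$, $\vartheta$ in $\omega(\xi)\setminus\O$ whose trajectories enter, respectively leave, $\overline{U_1}$ on a half-line, applies \ref{v_n*}(a) to $\vartheta$ and \ref{v_n*}(b) to $\psi$ to trap $N^*$ between $V(\vartheta(0))$ and $V(\psi(0))$, and then invokes \cref{l1} to force $V(\psi(0))=V(\vartheta(0))=\lim_{t\to\infty}V(\varphi(t))=N^*$, contradicting the hypothesis. Second, in your closedness argument you assert that ``$\alpha$- and $\omega$-limit conditions and disjointness from $\O$ survive the limit because limit sets depend upper-semicontinuously.'' This is not a valid step --- limit sets of entire solutions are not upper-semicontinuous in this generality, and even if they were it would not give the needed conclusion about the limit solution's entire trajectory. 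What the paper actually does (\cref{l:sn_bounded_away}) is prove that for $N\ne N^*$ the set $\S_N$ (and $\S_N^+$ for $N>N^*$) is \emph{uniformly} bounded away from $\O$ in $\A$; this again requires an Ascoli-plus-\ref{v_n*} contradiction argument. With that uniform bound, the entire trajectories $\varphi_n(t)$ lie in $\A\setminus\widetilde U$, hence so does the pointwise limit $\varphi(t)$, and therefore $\alpha(\varphi)\cup\omega(\xi)\subseteq\A\setminus\widetilde U$ is disjoint from $\O$. Both of these auxiliary lemmas are load-bearing, and your proposal currently lacks substitutes for them.
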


Let us first make some remarks and clarifications before turning to the proof.

\begin{remark}
Some of the assumptions might look artificial at first. However, they are constructed in a way that \cref{th:morse} can cover all the corresponding results of  \cite{mallet-paret:88-morse,garab:20,polner:02,garab:poetzsche:19,bartha:garab:krisztin:25}, as we demonstrate in \cref{sec:examples}.
\end{remark}

\begin{remark}
Note that our conditions permit the case $\widetilde{\X}=\X$ or $\widetilde{d}=d$. This is indeed the case when adapting the result to the discrete-time problem in \cite{garab:poetzsche:19} -- see \cref{subsec:covers-prev-results}. 
\end{remark}

\begin{remark}
In all of the examples mentioned, the discrete Lyapunov function is based on counting the number of sign changes on suitable sets. In these examples, $\O$ is the singleton of the origin (in the appropriate space), and $N^\ast$ is related to the number of unstable eigenvalues of the linearization. These -- together with other properties of the distribution of eigenvalues -- make it possible to show condition \ref{v_n*}. It is somewhat difficult to imagine that this condition holds for a different type of discrete Lyapunov function when $\O\neq \emptyset$; however, $\O=\emptyset$ is admissible, in which case condition \ref{v_n*} imposes no restriction.
\end{remark}

We need several lemmas to prove \cref{th:morse}. The next three lemmas are crucial for establishing the Morse properties.

\begin{lemma} \label{l1} Suppose that $\varphi\colon  \T \to \X$ is a bounded entire solution through $\xi \in \A$.
	\begin{itemize}
		\item[(a)]  If $N\in \N_0$ and $\lim_{t \to \infty} V(\varphi(t))=N$, then $V(\eta)=N$ for any $\eta \in \omega(\xi) \setminus \O$.
            \item[(b)] If $N\in \N_0\cup \{\infty\}$ and  $\lim_{t \to \infty} V(\varphi(t))\ge N$, then $V(\eta)\ge N$ for any $\eta \in \omega(\xi) \setminus \O$.
		\item[(c)]  If $N\in \N_0$ and $\lim_{t \to -\infty} V(\varphi(t))=N$, then $V(\eta)=N$ for any $\eta \in \alpha(\varphi) \setminus \O$.
            \item[(d)] If $N\in \N_0\cup \{\infty\}$ and $\lim_{t \to -\infty} V(\varphi(t))\ge N$, then $V(\eta)\ge N$ for any $\eta \in \alpha(\varphi) \setminus \O$.
	\end{itemize}
\end{lemma}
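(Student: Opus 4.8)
The four statements are all of the same flavour, so the plan is to prove (b) and (d) carefully and then obtain (a) and (c) as the special case of equality. Let me sketch (b) first; (d) is the time-reversed analogue. Fix $\eta\in\omega(\xi)\setminus\O$ and pick, by the characterisation of $\omega(\xi)$, a sequence $t_n\to\infty$ with $S(t_n,\xi)=\varphi(t_n)\to\eta$ in $(\X,d)$. Since $\eta\notin\O$ and $\O$ is closed (hypothesis \ref{cond-o-exists}), we may assume $\varphi(t_n)\in\X\setminus\O$ for all $n$, so \ref{v_usc} applies: $V(\eta)\le\liminf_{n\to\infty}V(\varphi(t_n))$. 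But $\varphi(t)\in\X\setminus\O$ would have to hold for all large $t$ for the hypothesis "$\lim_{t\to\infty}V(\varphi(t))\ge N$" to even make sense; more carefully, $\varphi$ itself may meet $\O$, but once $\varphi(t)\in\O$ the invariance $S(t,\O)=\O$ forces $\varphi$ to stay in $\O$ for all later times, which would put $\eta\in\O$, a contradiction. Hence $\varphi(t)\in\X\setminus\O$ for all $t$ larger than some $t_0$, the quantity $\lim_{t\to\infty}V(\varphi(t))$ is well defined in $\N_0\cup\{\infty\}$ because $V(\varphi(t))$ is eventually nonincreasing in $t$ by \ref{v_dec} and integer-valued, and therefore $\liminf_{n}V(\varphi(t_n))=\lim_{t\to\infty}V(\varphi(t))\ge N$. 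Combining, $V(\eta)\ge N$, which is (b).

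For (d) I would argue symmetrically using $\alpha(\varphi)$ and \ref{v_usc}: take $t_n\to-\infty$ with $\varphi(t_n)\to\eta\in\alpha(\varphi)\setminus\O$, note again that $\varphi(t)\notin\O$ for all sufficiently negative $t$ (if $\varphi(s)\in\O$ for some $s$ then $\varphi(t)\in\O$ for all $t\ge s$ by positive invariance of $\O$, and running this observation over a sequence $s\to-\infty$ would force $\varphi\equiv$ something in $\O$, contradicting $\eta\notin\O$; alternatively one directly uses that $\O$ being invariant and compact, a solution not identically in $\O$ that enters $\O$ cannot have left it in the past), and that $t\mapsto V(\varphi(t))$ is nonincreasing and $\N_0$-valued on $\T_-$ down to $-\infty$, hence has a limit in $\N_0\cup\{\infty\}$, which dominates every $V(\varphi(t))$ and in particular equals $\liminf_n V(\varphi(t_n))\ge N$ by hypothesis; then \ref{v_usc} gives $V(\eta)\ge N$.

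Parts (a) and (c) then follow by applying, respectively, (b) and (d) with the given $N\in\N_0$ to get $V(\eta)\ge N$, and we must upgrade this to equality. Here the point is that $V$ is nonincreasing along the whole entire solution: for $\eta\in\omega(\xi)\setminus\O$ and any $s\in\T_-$ we have (via a diagonal/limit argument) $V(\varphi(t_n+s))$ with $t_n+s\to\infty$ as well, so $V(\varphi(t_n+s))\to N$, and by \ref{v_dec} $V(\varphi(t_n+s))\ge V(\varphi(t_n))$ eventually is the wrong direction — instead I would argue that for fixed large $n$, $V(\varphi(t))$ for $t\ge t_n$ is squeezed between $V(\varphi(t_n))$ and its limit $N$, both of which tend to $N$, so $V(\varphi(t))\to N$ uniformly enough that $\liminf_n V(\varphi(t_n))=N$ exactly; then $V(\eta)\le N$ by the $\liminf$ inequality in \ref{v_usc}, giving $V(\eta)=N$. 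For (c), the analogous squeezing on $\T_-$ gives $\limsup$ rather than $\liminf$ control, but since $V(\varphi(t))$ is eventually constant $=N$ as $t\to-\infty$ (an integer sequence that is monotone and convergent is eventually constant), we simply have $V(\varphi(t_n))=N$ for all large $n$, hence $V(\eta)\le N$ by \ref{v_usc}, and combined with (d), $V(\eta)=N$.

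The main obstacle I anticipate is not any single estimate but the bookkeeping around the set $\O$: one must be careful that $V$ is only defined on $\X\setminus\O\subseteq\dom(V)$, so every invocation of \ref{v_usc}, \ref{v_dec} needs the relevant points to lie outside $\O$, and the cleanest way to guarantee this is the structural fact that an entire solution either lies entirely in $\O$ or never meets $\O$ at all in the relevant time ray — which follows from $S(t,\O)=\O$ and uniqueness-free forward invariance plus compactness/closedness of $\O$. Pinning down this dichotomy rigorously (especially the backward direction for $\alpha$-limits) is the part that deserves the most care; the rest is the standard interplay of lower semicontinuity, monotonicity, and integrality of $V$.
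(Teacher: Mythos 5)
The central argument for part (b) contains a logical error that invalidates the proof. You apply \ref{v_usc} to obtain $V(\eta)\le\liminf_{n\to\infty}V(\varphi(t_n))$, then observe that $\liminf_n V(\varphi(t_n))=\lim_{t\to\infty}V(\varphi(t))\ge N$, and conclude $V(\eta)\ge N$. But from $V(\eta)\le X$ and $X\ge N$ one cannot deduce $V(\eta)\ge N$. Property \ref{v_usc} is lower semicontinuity of $V$ and gives only an \emph{upper} bound on $V(\eta)$; the $\ge$ direction is exactly the nontrivial content of the lemma and requires a different idea. Since (a) and (c) rely on (b) and (d) for the $\ge$ half, the proof is incomplete across all four parts. (The $\le N$ half of (a) and (c), which you do obtain correctly from \ref{v_usc}, and the bookkeeping around $\O$ are the easy parts, contrary to your assessment.)

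The paper's proof closes the gap as follows. Because $\omega(\xi)\setminus\O$ is invariant, there is a bounded entire solution $\psi$ through $\eta$ taking values in $\omega(\xi)\setminus\O$; monotonicity \ref{v_dec} gives $V(\eta)=V(\psi(0))\ge\lim_{t\to\infty}V(\psi(t))\eqqcolon K$, and the task becomes to show $K\ge N$. Assuming $K<N$, property \ref{v_r} yields some $T>0$ with $V(\psi(T))=K$ and $\psi(T)\in\cR$. Hypothesis \ref{att_d} upgrades the $d$-convergence $\varphi(t_n)\to\eta$ to $\widetilde d$-convergence of a subsequence; continuity of $S$ then gives $\widetilde d(\varphi(t_n+T),\psi(T))\to 0$, and property \ref{v_cont} (continuity of $V$ at points of $\cR$ w.r.t.\ $\widetilde d$) forces $V(\varphi(t_n+T))\to V(\psi(T))=K$, contradicting $V(\varphi(t_n+T))\ge N>K$. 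None of \ref{v_cont}, \ref{v_r}, or \ref{att_d} appear in your proposal, yet they are precisely the ingredients needed for the lower bound, and they are the real substance of this lemma.
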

\begin{proof}
	We only prove statements (a) and (b) (simultaneously), since the others can be shown analogously. 
    
	For an arbitrary $\eta \in \omega(\xi) \setminus \O$, by hypothesis \ref{att_d}, there exists a sequence $( t_n)_{n =1}^{\infty}$  in $\T_+$ with $t_n \to \infty$ and
    \begin{equation}
        \lim_{n\to\infty}\widetilde{d}(\varphi(t_n),\eta)=0.\label{lim_d_tilde}
    \end{equation}
	
	Then, by the invariance of $\omega(\xi) \setminus \O$ there exists a bounded entire solution $\psi\colon \T \to \X  $ 
	 with $\psi(t) \in \omega(\xi) \setminus \O$ for all $t \in \T$ and $\psi(0)=\eta$. Hence, for any $t\in\T$, there exists $t_n'\to\infty$ with $\lim_{n\to\infty}\varphi(t_n')=\psi(t)$.

     Note that this, together with  \ref{v_usc}, implies  $V(\psi(t))\le N$ for all $t\in\T$, assuming the conditions of statement (a) are satisfied.
     
     Furthermore, by \ref{v_dec}, there exists $K\coloneq\lim_{t\to\infty} V(\psi(t))$. We claim that $K\geq N$.

     Assume indirectly that $K<N$, and hence, $K<\infty$. Then, \ref{v_r} implies the existence of $T>0$ such that $V(\psi(T))=K$ and $\psi(T)\in\cR$. Note that
     $$\lim_{n\to\infty}\widetilde{d}(\varphi(t_n+T),\psi(T))=0$$
     follows from \eqref{lim_d_tilde} and the continuity of $S$. Applying  \ref{v_r} again along with our assumption, we obtain
     $$N\le\lim_{n\to\infty}V(\varphi(t_n+T))=V(\psi(T))=K,$$
     which is a contradiction. Thus, $V(\eta)\ge K\ge N$, implying statement (a).

     It follows from $K\geq N$ and the monotonicity of $V$ that $V(\eta)\geq N$, as stated in part~(b).
\end{proof}

\begin{lemma} \label{l2}
	Suppose that $\O\neq \emptyset$ and $\varphi\colon\T \to \X$ is a bounded entire solution through $\xi \in \A$.
	\begin{itemize}
		\item[(a)] If $\lim_{t \to \infty} V(\varphi(t)) \neq N^*$, then either $\omega(\xi)\subseteq\O$ or $\omega(\xi)\subseteq\A\setminus\O$.
		\item[(b)] If $\lim_{t \to -\infty} V(\varphi(t)) \neq N^*$, then either $\alpha(\varphi)\subseteq\O$ or $\alpha(\varphi)\subseteq\A\setminus\O$. 
	\end{itemize}
\end{lemma}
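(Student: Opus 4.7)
The two parts are structurally symmetric, so I sketch (a); part (b) follows by applying the same argument to the time-reversed orbit and swapping the roles of \ref{v_n*}(a) and (b).

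Positive invariance of $\O$ yields a dichotomy: $\varphi$ is either permanently in $\X\setminus\O$ or, from some $t^\ast\in\T$ onward, permanently in $\O$. In the latter case $\omega(\xi)\subseteq\O$ and (a) holds, so I may assume $\varphi(\T)\subseteq\X\setminus\O$. Then \ref{v_dec} makes $t\mapsto V(\varphi(t))$ nonincreasing, so $L\coloneqq\lim_{t\to\infty}V(\varphi(t))\in\N_0\cup\{\infty\}$ exists, and by hypothesis $L\ne N^*$. Suppose toward a contradiction that both $A\coloneqq\omega(\xi)\cap\O$ and $B\coloneqq\omega(\xi)\setminus\O$ are non-empty.

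The goal is to construct an entire solution $\psi\colon\T\to\X\setminus\O$ with $\psi(\T)\subseteq\omega(\xi)$ whose $\alpha$-limit is contained in $\O$. Once such $\psi$ is at hand, Lemma~\ref{l1}(a) (for finite $L$) or Lemma~\ref{l1}(b) (for $L=\infty$) yields $V(\psi(t))=L$ for every $t$, while $\alpha(\psi)\subseteq\O\subseteq U$ together with the standard attraction to the $\alpha$-limit force $\psi(t)\in U\cap\A\subseteq\overline U\cap\A$ for all sufficiently negative $t$; then \ref{v_n*}(a) gives $V(\psi)\le N^*$, hence $L\le N^*$. A symmetric construction, producing an entire solution with $\omega$-limit inside $\O$ and invoking \ref{v_n*}(b), delivers $L\ge N^*$. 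Combining these forces $L=N^*$, contradicting the hypothesis. To build $\psi$, positive invariance of $\O$ forces the backward trajectory in $\omega(\xi)$ of any point of $B$ to stay in $B$; combining this with the invariance $S(t,\omega(\xi))=\omega(\xi)$ and a compactness argument, one extracts $\eta\in B$ whose full forward trajectory also remains in $B$, giving an entire solution $\psi\colon\T\to B$. Its $\alpha$-limit $\alpha(\psi)\subseteq\omega(\xi)$ contains a minimal compact invariant subset, which -- by positive invariance of $\O$ together with the fact that every point of such a minimal set has dense forward orbit in it -- is either entirely inside $\O$ or disjoint from it. Exploiting $A\ne\emptyset$ and iterating the selection of $\eta$ inside successive $\alpha$-limits, one arranges $\alpha(\psi)\subseteq\O$.

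The main obstacle is precisely this final selection step: ensuring simultaneously that $\psi(\T)\subseteq\X\setminus\O$ (so that \ref{v_n*}(a) is applicable) and that the \emph{full} $\alpha$-limit of $\psi$ (not merely an accumulation point) is pushed into $\O$ (so that the trapping hypothesis of \ref{v_n*}(a) in $\overline U\cap\A$ genuinely holds). The requisite limit extractions rely on Ascoli's theorem \cref{ascoli}, the compactness of $\A$, and the passage between $d$- and $\widetilde d$-convergence provided by \ref{att_d}; the structural dichotomy of minimal invariant subsets of $\omega(\xi)$, together with the non-emptiness of $A$, drives the selection.
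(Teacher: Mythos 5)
Your high-level strategy---build two entire solutions inside $\omega(\xi)\setminus\O$, one trapped near $\O$ in backward time and one in forward time, apply \ref{v_n*} to each and then Lemma~\ref{l1} to pin the limit of $V$ along $\varphi$ to $N^*$---is exactly the paper's. The problem is the concrete construction you propose, and you have (rightly) flagged where it strains: the ``iterate the selection of $\eta$ inside successive $\alpha$-limits'' step is not justified and is, in fact, not needed.

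You set yourself a target stronger than \ref{v_n*} demands: an entire solution $\psi$ with $\psi(\T)\subseteq\omega(\xi)\setminus\O$ \emph{and} $\alpha(\psi)\subseteq\O$. But \ref{v_n*}(a) only requires $\psi(t)\in\overline{U}\cap\A$ for $t\le t_0$, not that $\alpha(\psi)$ sit inside $\O$. Your iterative minimal-set argument has no mechanism that forces the successive $\alpha$-limits to shrink into $\O$: a minimal invariant subset of $\alpha(\psi)$ is, as you say, either inside $\O$ or disjoint from $\O$, but if it comes out disjoint, nothing in the argument moves you closer to $\O$ on the next round. In other words, there is no descent. (Producing such a connection is essentially a Conley-theoretic ``$\O$ is not an attractor relative to the internally chain transitive set $\omega(\xi)$'' statement, which would need its own proof.) The paper sidesteps this entirely. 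Since $\omega(\xi)\cap\O\ne\emptyset$ but $\omega(\xi)\not\subseteq\O$, one fixes a neighborhood $U_1$ of $\O$ with $\overline{U_1}\subset U$ and $\omega(\xi)\not\subseteq\overline{U_1}$; then $\varphi$ must enter and leave $\overline{U_1}$ infinitely often. Re-centering $\varphi$ at the entry times $t_n-s_n$ and at the exit times $t_n+\tau_n$, and passing to Ascoli/compactness limits, yields $\psi$ with $\psi(0)\notin U_1$, $\psi(t)\in\overline{U_1}$ for $t>0$, and $\vartheta$ with $\vartheta(0)\notin U_1$, $\vartheta(t)\in\overline{U_1}$ for $t<0$. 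Both base points lie in $\omega(\xi)\setminus\O$, so \ref{v_n*}(b) gives $V(\psi)\ge N^*$, \ref{v_n*}(a) gives $V(\vartheta)\le N^*$, and Lemma~\ref{l1} identifies both $V(\psi(0))$ and $V(\vartheta(0))$ with $\lim_{t\to\infty}V(\varphi(t))$, forcing that limit to equal $N^*$. The point to internalize: the trapping hypothesis in \ref{v_n*} is a half-line containment in $\overline{U}$, and that weaker property falls directly out of the entry/exit re-centering; demanding $\alpha(\psi)\subseteq\O$ is an overshoot that creates the very obstacle you identified.
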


\begin{proof}
	Again, we only prove part (a). Let us assume that $\omega(\xi)\cap\O\ne\emptyset$, but $\omega(\xi) \not\subseteq \O$, furthermore, let $U \subseteq \X$ be as in property \ref{v_n*}. Then there exists an open neighborhood $U_1$ of $\O\subseteq\A$ such that $U_1 \subset U$ and $\omega(\xi) \nsubseteq \overline{U_1}$. This, together with $\omega(\xi)\cap\O\ne\emptyset$, implies that $\varphi(t)$ enters and leaves $\overline{U_1}$ infinitely many times as $t \to \infty$.
    
    Accordingly, there exist positive sequences $t_n$, and $\tau_n$ in $\T_+$ such that $t_n$ satisfies
    $$\varphi(t_n)\in U_1,\quad d(\varphi(t_n),\O)\to0,\quad t_n\to\infty,\quad\text{as }n\to\infty,$$
    and $\tau_n$ is defined by
    \begin{equation*}
        \tau_n \coloneq\sup\left\{t\in\T_+: \varphi(u)\in U_1,\ \forall u\in[t_n,t_n+t) \cap \T \right\}.
    \end{equation*}
   Note that since $S(t,\O)=\O$ for all $t\in\T_+$, and $S$ is continuous, $\tau_n$ is well-defined for sufficiently large $n$, and we have $\tau_n\to\infty$ as $n\to\infty$. Moreover,  $\varphi(t_n+\tau_n)\notin U_1$. Then for each $n$, we can choose $s_n\in\T_+$ such that $\varphi(t_n-s_n)\notin U_1$, $\varphi(u)\in U_1$ for all $u \in (t_n-s_n, t_n+\tau_n)\cap \T$, and $t_n+\tau_n\le t_{n+1}-s_{n+1}$.
 
    Set 
    \begin{equation*}
        \psi_n(t)=\varphi(t_n-s_n+t) \quad \text{ and } \quad \vartheta_n(t)=\varphi(t_n+\tau_n+t) \quad \text{ for } t \in \T.
    \end{equation*}
    Now let us consider the continuous-time case ($\T=\R$). We want to apply \cref{ascoli} with
    $$\U=\R,\quad\V=\X,\quad d_\V=d,\quad\text{and}\quad\F=\{\psi_n: n\in\N\}\subseteq\cC(\R,\X),$$
    
    Obviously, the set
    \begin{equation*}
        \left\lbrace \psi_n(t):  n \in \N \right\rbrace = \left\lbrace \varphi(t_n-s_n+t):  n \in \N \right\rbrace \subseteq \A
    \end{equation*}
    is precompact for each $t\in\R$ as it is a subset of the compact global attractor $\A$. 
    To apply \cref{ascoli}, equicontinuity is needed. Therefore, we have to show that for all $\eps > 0$, there exists a $\delta>0$  such that $d(\psi_n(t), \psi_n(\overline{t}))< \eps$ for all $t$, $n$ with $|t - \overline{t}|< \delta.$
	
	For any $\eps>0$, the set $[0, \eps] \times \A$ is compact. Since the semi-dynamical system $S$ is continuous, the map $(s,\eta) \mapsto S(s,\eta)$ is uniformly continuous on $[0, \eps] \times \A$. Consequently, for every $\eps >0$, there exists a $\delta \in [0, \eps]$ such that $d(S(t, \eta), S(\overline{t}, \overline{\eta})) <\eps$ for all $(t, \eta),\, (\overline{t}, \overline{\eta}) \in [0, \eps] \times \A$ satisfying $|t-\overline{t}| < \delta$ and $d(\eta, \overline{\eta}) < \delta$.
    
In particular, we obtain
\begin{align*}
	d(\psi_n(t), \psi_n(\overline{t})) &= d(\varphi(t_n-s_n+t), \varphi(t_n-s_n+\overline{t})) \\
	&= d(S(t_n-s_n+t, \xi), S(t_n-s_n+\overline{t}, \xi)) \\
	&= d(S(t, S(t_n-s_n,\xi)),S(\overline{t}, S(t_n-s_n,\xi))) < \eps
\end{align*}
for all $|t-\overline t|<\delta$. Therefore, $\psi_n$ is equicontinuous and similarly, this also holds for $\vartheta_n$. 

Now, \cref{ascoli} yields the existence of a sequence of indices $(n_k)_{k \in \N_0}$ and maps $\psi, \vartheta\in\cC(\R,\X)$ such that 
$$\psi_{n_k}(t) \to \psi(t)\quad \text{and}\quad \vartheta_{n_k}(t) \to \vartheta(t)\quad \text{for all } t \in \R, \text{ as } k \to \infty.$$
In the discrete-time case, the same follows directly from (sequential) compactness of $\A$.

Since $\varphi\colon \T \to \X$ is a bounded entire solution and the semi-dynamical system is continuous, for all $s \in \T$, $t \in \T_+$, we obtain
\begin{align*}
	S(t, \psi(s)) & = S(t, \lim_{k \to \infty} \psi_{n_k}(s)) \\
	& = S(t, \lim_{k \to \infty} \varphi(t_{n_k}-s_{n_k} + s)) \\
	& = \lim_{k \to \infty} S(t,  \varphi(t_{n_k}-s_{n_k} + s)) \\
	& = \lim_{k \to \infty} \varphi(t_{n_k}-s_{n_k} + s + t) \\
	& = \lim_{k \to \infty} \psi_{n_k}(s + t)= \psi(s+t),
\end{align*}
that is, $\psi\colon \T \to \X$ is a bounded entire solution, and by an analogous argument, so is $\vartheta\colon \T \to \X$.
	
	Furthermore, since for all $n \in \N$,  $\psi_n(t)\in U_1$ for all $t\in(0,s_n+\tau_n)$, and $\vartheta_n(t) \in U_1$ for all $t\in(-s_n-\tau_n,0)$, we also have
	\begin{align*}
		\psi(t) \in \overline{U_1} \subset \overline{U}\quad \text{for all } t \in \T_+,\quad 
		\vartheta(t) \in \overline{U_1} \subset \overline{U}\quad \text{for all } t \in \T_-, 
	\end{align*}
	and 
	\begin{align*}
         \psi(0) \in \A \setminus U_1, \qquad \vartheta(0) \in \A \setminus U_1.
	\end{align*}
	Thus $\psi$ and $\vartheta$ are nontrivial entire solutions.
	By the monotonicity of $V$ and \ref{v_n*}, it follows that
	\begin{equation*} \label{eq1}
		V(\vartheta(t)) \leq N^* \leq V(\psi(t)) \quad \text{ for all } t \in \T.
	\end{equation*}
	In particular,
	\begin{equation*}
		V(\vartheta(0)) \leq N^* \leq V(\psi(0)).
	\end{equation*}
	But $\psi(0)$, $\vartheta(0) \in \omega(\xi) \setminus \O$, and by \cref{l1}, we conclude
	\begin{equation*}
		V(\psi(0))=V(\vartheta(0)) = \lim_{t \to \infty} V(\varphi(t)).
	\end{equation*}
	This altogether yields
	\begin{equation*}
		\lim_{t \to \infty}V(\varphi(t))=N^*,
	\end{equation*} 
	which is a contradiction. 
\end{proof}
\pagebreak

\begin{lemma} \label{l:v_n}
	Suppose that $\O\neq \emptyset$, $N$ is a nonnegative integer, and $\varphi\colon \T \to \X$ is a bounded entire solution through $\xi \in \A\setminus \O $.
	\begin{itemize}
		\item[(a)] If $\lim_{t \to \infty} V(\varphi(t))=N\neq N^*$, then either $\omega(\xi) \subseteq \O$ or $\omega(\xi) \subseteq \S_N$.
            \item[(b)] If $\lim_{t \to \infty} V(\varphi(t))\ge N>N^*$, then either $\omega(\xi) \subseteq \O$ or $\omega(\xi) \subseteq \S_N^+$. 
		\item[(c)] If $\lim_{t \to -\infty} V(\varphi(t))=N\neq N^*$, then either $\alpha(\varphi)\subseteq\O$ or $\alpha(\varphi) \subseteq \S_N$.
            \item[(d)] If $\lim_{t \to -\infty} V(\varphi(t))\ge N>N^*$, then either $\alpha(\varphi)\subseteq\O$ or $\alpha(\varphi) \subseteq \S_N^+$.
	\end{itemize}
\end{lemma}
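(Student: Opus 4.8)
The plan is to deduce all four parts from \cref{l1} and \cref{l2} together with the elementary properties of $\omega$- and $\alpha$-limit sets; I would spell out part~(a) in detail and indicate the routine modifications for the rest.

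First I would invoke \cref{l2}(a): since $\lim_{t\to\infty}V(\varphi(t))=N\neq N^*$, either $\omega(\xi)\subseteq\O$, in which case there is nothing more to prove, or $\omega(\xi)\subseteq\A\setminus\O$, which I now assume. Fix an arbitrary $\eta\in\omega(\xi)$, so $\eta\in\A\setminus\O$. As $\omega(\xi)$ is a nonempty compact invariant subset of $\A$ (with $S(t,\omega(\xi))=\omega(\xi)$ for all $t\in\T_+$), there is a bounded entire solution $\psi\colon\T\to\X$ with $\psi(0)=\eta$ and $\psi(t)\in\omega(\xi)\subseteq\A\setminus\O$ for every $t\in\T$. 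By \cref{l1}(a) applied to $\varphi$ we get $V(\zeta)=N$ for all $\zeta\in\omega(\xi)\setminus\O=\omega(\xi)$, hence $V(\psi(t))=N$ for all $t\in\T$. Finally, since $\psi$ takes values in the closed invariant set $\omega(\xi)$, both $\alpha(\psi)$ and $\omega(\eta)$ lie in $\omega(\xi)$ and are therefore disjoint from $\O$. Thus $\eta\in\S_N$, and as $\eta\in\omega(\xi)$ was arbitrary, $\omega(\xi)\subseteq\S_N$, which proves (a).

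For (b), the same argument works with \cref{l2}(a) (applicable because $\lim_{t\to\infty}V(\varphi(t))\geq N>N^*$ is $\neq N^*$) and \cref{l1}(b) in place of \cref{l1}(a), which yields $V(\psi(t))\geq N$ for all $t$ and hence $\eta\in\S_N^+$. Parts (c) and (d) are the time-reversed counterparts: one uses \cref{l2}(b) to split into the cases $\alpha(\varphi)\subseteq\O$ and $\alpha(\varphi)\subseteq\A\setminus\O$, exploits that $\alpha(\varphi)$ is a nonempty compact invariant subset of $\A$ to obtain, through each $\eta\in\alpha(\varphi)$, a bounded entire solution $\psi$ with values in $\alpha(\varphi)$, applies \cref{l1}(c) (resp.\ \cref{l1}(d)) to control $V$ along $\psi$, and observes that $\alpha(\psi),\omega(\eta)\subseteq\alpha(\varphi)\subseteq\A\setminus\O$.

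I do not anticipate a real obstacle: the statement is essentially a bookkeeping consequence of \cref{l1} and \cref{l2}. The only point requiring a little care is checking that the witnessing solution $\psi$ satisfies $(\alpha(\psi)\cup\omega(\eta))\cap\O=\emptyset$, as demanded by the definitions of $\S_N$ and $\S_N^+$; this is immediate once $\psi$ is taken to lie inside $\omega(\xi)$ (resp.\ $\alpha(\varphi)$), which in the relevant case is already known to avoid $\O$. A minor secondary point is making sure the hypothesis $\lim V(\varphi(t))\neq N^*$ needed to apply \cref{l2} is in force, which follows from $N\neq N^*$ in (a), (c) and from $N>N^*$ in (b), (d).
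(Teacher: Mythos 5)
Your proof is correct and follows essentially the same route as the paper: split via \cref{l2} into the cases $\omega(\xi)\subseteq\O$ and $\omega(\xi)\cap\O=\emptyset$, then for an arbitrary $\eta\in\omega(\xi)$ produce a witnessing solution $\psi$ living inside $\omega(\xi)$, control $V$ along $\psi$ via \cref{l1}, and use compactness/invariance of $\omega(\xi)$ to ensure $(\alpha(\psi)\cup\omega(\eta))\cap\O=\emptyset$. The paper's write-up is just a more condensed version of your argument, with the same key steps in the same order.
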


\begin{proof}
	In order to prove assertion (a) (and (b)), let us assume that $\omega(\xi) \not\subseteq \O$ and that $\lim_{t \to \infty}V(\varphi(t))=N\neq N^*$ ($\lim_{t \to \infty}V(\varphi(t))\ge N>N^*$). According to \cref{l2},\linebreak $\omega(\xi) \cap\O=\emptyset$. Let $\eta \in \omega(\xi)$ be arbitrary. Since $\omega(\xi)$ is invariant, there exists a bounded entire solution $\psi\colon \T \to \X$ through $\eta$ such that $\psi(t) \in \omega(\xi)$ for all $t \in \T$. By compactness of $\omega(\xi)$, it follows that $\alpha(\psi) \cup \omega(\eta) \subseteq \omega(\xi)$. Therefore $(\alpha(\psi) \cup \omega(\eta))\cap\O=\emptyset$. 
    
	On the other hand, since $\psi(t) \in \omega(\xi)$ and by \cref{l1}, we have $V(\psi(t))=N$ ($V(\psi(t))\ge N$) for all $t \in \T$, so $\eta \in \S_N$ ($\eta \in \S_N^+$).

	The proof of statements (c) and (d) is analogous. 
\end{proof}

The next lemma shows that the Morse sets different from $\M_{N^\ast}$ are bounded away from $\O$, which is key in proving the compactness of the Morse sets.

\begin{lemma} \label{l:sn_bounded_away}
	 If $\O\neq \emptyset$, then for every $N \in \N_0 \setminus \{N^*$\}, there exists an open neighborhood $\widetilde{U}$  of $\O$ in $\A$ such that $\S_N \cap \widetilde{U} = \emptyset$. If $N>N^*$, then $\S_N^+\cap\widetilde{U}=\emptyset$.
\end{lemma}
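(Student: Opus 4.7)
I would argue by contradiction. Suppose some $N \in \N_0 \setminus \{N^*\}$ is such that $\S_N$ meets every neighborhood of $\O$ in $\A$ (the $\S_{N_0}^+$ case with $N_0 > N^*$ is handled identically with $N_0$ in place of $N$). Choose $\xi_k \in \S_N$ with $d(\xi_k,\O) \to 0$, pass by compactness of $\A$ to a subsequence with $\xi_k \to \xi^* \in \O$, and for each $k$ let $\varphi_k \colon \T \to \A \setminus \O$ be the entire solution through $\xi_k$ with $V(\varphi_k) \equiv N$ and $(\alpha(\varphi_k) \cup \omega(\xi_k)) \cap \O = \emptyset$ supplied by the definition of $\S_N$. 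Fix the $U$ from \ref{v_n*} and a smaller open neighborhood $U_1$ of $\O$ with $\overline{U_1} \subseteq U$. Since $S(t,\xi^*) \in \O \subseteq U_1$ for every $t \ge 0$ and $S$ is uniformly continuous on $[0,T] \times \A$, the convergence $\xi_k \to \xi^*$ forces $\varphi_k([0,T]) \subseteq U_1$ for every fixed $T$ once $k$ is large; hence the forward exit time $\sigma_k := \inf\{t \ge 0 : \varphi_k(t) \notin U_1\} \in (0,\infty]$ satisfies $\sigma_k \to \infty$.

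The argument then splits according to the sign of $N-N^*$. If $N < N^*$, either $\sigma_k = \infty$ for some $k$ --- so that $\varphi_k([0,\infty)) \subseteq \overline{U_1} \subseteq \overline{U}$ and \ref{v_n*}(b) yields the contradiction $V(\varphi_k) \ge N^* > N$ --- or an Ascoli/shift argument on the entry side of the excursion, analogous to the construction of $\psi_n$ in the proof of \cref{l2}, produces a limit entire solution $\psi$ with $\psi(0) \notin \O$ and $\psi(t) \in \overline{U}$ for all $t \ge 0$. Then \ref{v_n*}(b) gives $V(\psi) \ge N^*$, whereas \ref{v_usc} applied along $\psi_k(0) \to \psi(0)$ forces $V(\psi(0)) \le N$, again contradicting $N < N^*$.

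If $N > N^*$ (and, analogously, in the $\S_{N_0}^+$ case), I would anchor at the forward exit by $\vartheta_k(t) := \varphi_k(t + \sigma_k)$. Compactness of $\partial U_1 \cap \A$ yields a subsequence with $\vartheta_k(0) \to \zeta^* \in \partial U_1 \subseteq \A \setminus \O$, and Ascoli (exactly as in \cref{l2}) supplies a further subsequence along which $\vartheta_k \to \vartheta$ uniformly on compacts, $\vartheta$ an entire solution with $\vartheta(t) \in \overline{U_1} \subseteq \overline{U}$ for every $t \le 0$ (since $\sigma_k \to \infty$). Invariance of $\O$ forces $\vartheta(t) \notin \O$ for $t \le 0$. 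Assuming $\vartheta \colon \T \to \X \setminus \O$, \ref{v_n*}(a) gives $V(\vartheta) \le N^*$ everywhere. For the matching lower bound, $V(\varphi_k) \equiv N$ and \ref{v_r} place the whole trajectory $\varphi_k(\T) \subseteq \cR$, hence $\vartheta_k(t) \in \cR$ for every $t$; the function $V(\vartheta(\cdot))$ is integer-valued, nonincreasing, and bounded by $N^*$, so it stabilizes as $t \to -\infty$, and \ref{v_r} applied to $\vartheta$ itself then places $\vartheta(t) \in \cR$ for $t$ sufficiently negative. For such $t$, \ref{att_d} upgrades $\vartheta_k(t) \to \vartheta(t)$ in $d$ to $\widetilde d$-convergence along a further subsequence, and \ref{v_cont} gives $V(\vartheta(t)) = \lim_k V(\vartheta_k(t)) = N$, contradicting $V(\vartheta(t)) \le N^* < N$.

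The main technical obstacle I anticipate is verifying that $\psi$ and $\vartheta$ take values in $\X \setminus \O$ on the whole of $\T$, as required to invoke \ref{v_n*}. Invariance of $\O$ settles this on one side of the anchor; on the other side a separate argument --- in the spirit of the one implicit in the proof of \cref{l2}, combining $\vartheta_k(\T) \subseteq \cR \subseteq \widetilde\X \setminus \O$ with \ref{v_usc} --- is needed to rule out the pathology that the limit meets $\O$ at a positive time.
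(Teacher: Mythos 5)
Your overall strategy agrees with the paper's---argue by contradiction, shift along an excursion near $\O$, pass to an Ascoli limit, and play \ref{v_n*} against the level $N$ of the Lyapunov function---and the observation that \ref{v_usc} alone suffices on the $N<N^*$ side (rather than \ref{v_cont} plus \ref{att_d}) is a clean minor simplification. The concern you raise at the end, that the limit solution must be checked to stay in $\X\setminus\O$ at positive (resp.\ negative) times so that \ref{v_n*} is applicable, is real, but it is present in the paper's own argument as well, so it is not a deviation.

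The genuine gap is that you never justify that the shift times exist. For $N>N^*$ you set $\vartheta_k(t)=\varphi_k(t+\sigma_k)$, which presupposes $\sigma_k<\infty$; but if $\sigma_k=\infty$ then $\varphi_k([0,\infty))\subseteq\overline{U_1}$, hence $\omega(\xi_k)\subseteq\overline{U_1}$, and \ref{v_n*}(b) only gives $V\geq N^*$, which is perfectly consistent with $V\equiv N>N^*$---your scheme produces no contradiction in that branch. Symmetrically, for $N<N^*$ with $\sigma_k<\infty$ your ``entry-side'' shift requires the backward exit time from $U_1$ to be finite; if it is $-\infty$, then $\alpha(\varphi_k)\subseteq\overline{U_1}$ and \ref{v_n*}(a) gives $V\leq N^*$, again consistent with $V\equiv N<N^*$. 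The paper closes exactly this hole with a preliminary claim: supposing $\omega(\xi_k)\subseteq\overline U$ (resp.\ $\alpha(\varphi_k)\subseteq\overline U$), pick $\eta$ in the limit set and a bounded entire solution $\psi$ through $\eta$ running entirely inside the (invariant) limit set; then $\psi(\T)\subseteq\overline U\setminus\O$, so \emph{both} parts of \ref{v_n*} apply and force $V(\psi)\equiv N^*$, whereas \cref{l1} (equivalently \cref{l:v_n}) forces $V(\eta)=N\neq N^*$. This contradiction shows the forward (resp.\ backward) exit time is finite, after which your Ascoli/shift argument goes through. You should insert this preliminary step before defining $\sigma_k$ and the entry time.
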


\begin{proof}
	Assume for contradiction that there exists an $N \in \N_0 \setminus \{N^*\}$ and a sequence $(\xi_n)_{n =1}^{\infty}$ in $\S_N \setminus \O$ (in $\S_N^+ \setminus \O$) such that $d(\xi_n,\O)\to 0$ as $n \to \infty$. For each $n \in \N$, let $\varphi_n\colon \T \to \X$ be a bounded entire solution through $\xi_n$, such that $(\alpha(\varphi_n) \cup \omega(\xi_n))\cap\O=\emptyset$ and $V(\varphi_n(t))=N$ for all $t \in \T$. 
    
	Case $N >N^*$. We claim that $\omega(\xi_n)$ cannot be a subset of $\overline{U}$, where $U \subseteq \X$ is as in property \ref{v_n*}.
    
	To see this, suppose instead that $\omega(\xi_n) \subseteq \overline{U}$. Let $\eta \in \omega(\xi_n)$. By invariance of $\omega(\xi_n)$, there exists a bounded entire solution $\psi_n\colon \T \to \X$ through $\xi_n$, such that $\psi_n(t) \in \omega(\xi_n) \subseteq \overline{U}$ for all $t \in \T$. Then \ref{v_n*} implies  $V(\psi_n(t)) = N^*$  for all $t \in \T$, but this contradicts $V(\eta)=N$, which should also hold by \cref{l:v_n}. Hence,  $\omega(\xi_n) \nsubseteq \overline{U}$ for all $n \geq 0$, as claimed. 
    
	Since $d(\xi_n,\O)\to 0$, we can define the sequence
    $$s_n=\sup\{s\in\T: \varphi_n(t)\in U,\ t\in[0,s) \cap \T\}.$$
	
	As the semi-dynamical system $S$ is continuous and satisfies $S(t,\O)=\O$ for all $t \in \T_+$, it follows that $s_n \to \infty$ as $n \to \infty$. Let $\psi_n(t)\coloneq \varphi_n(t+s_n)$ for all $t \in \T$.  By \cref{ascoli}, there exists a subsequence $\psi_{n_k}$ and an entire solution $\psi$ such that  $\psi_{n_k}(t) \to \psi(t)$ for all $t \in \T$ as $k \to \infty$. 
    
	The solution $\psi$ is nontrivial because $\psi(0)=\lim_{n \to \infty} \varphi_n(s_n) \not\in U$. On the other hand, $\psi(t)=\lim_{n \to \infty} \varphi_n(t+s_n) \in \overline{U}$ for all $t \leq 0$.
	
	Hence, property \ref{v_n*} yields $V(\psi(t)) \leq N^*$ for all $t \in \T$. 
    
	Finally, by \ref{v_dec} there exists some $T>0$, such that $\psi(T) \in \cR$. Since
	\begin{equation*}
		\varphi_{n_k}(s_{n_k}+T) = \psi_{n_k}(T) \to \psi(T) \quad \text{ for } k \to \infty,
	\end{equation*}
	we obtain by virtue of property \ref{v_cont} -- and by passing to a subsequence if necessary -- that
	\begin{equation*}
		N=\lim_{k \to \infty} V(\varphi_{n_k}(s_{n_k}+T)) = V(\psi(T)) \leq N^*,
	\end{equation*}
	 which contradicts $N>N^\ast$ and proves our statement. 
    
	The proof for the case $N < N^*$ is analogous. 
\end{proof}

\begin{lemma} \label{l6}
	The sets $\S_{N^*}$, $\S_N$, and $S_{N_0}^+$ are closed for all nonnegative integers $N$ and $N_0$ with $N_0>N^*$.
\end{lemma}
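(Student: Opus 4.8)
The plan is to use that all three sets are subsets of the compact attractor $\A$, so that ``closed'' is equivalent to ``sequentially compact''. Thus I take a sequence $(\xi_n)_{n=1}^\infty$ in the set under consideration, convergent in $(\X,d)$ to some $\xi\in\A$, and aim to show $\xi$ belongs to the set. For each $n$ fix the entire solution $\varphi_n$ witnessing membership of $\xi_n$, so $\varphi_n(0)=\xi_n$, the trajectory lies in $\A$, and $V(\varphi_n(t))$ equals $N$ (resp.\ is $\ge N_0$, resp.\ equals $N^\ast$) for all $t\in\T$. Exactly as in the proof of \cref{l2}, the family $\{\varphi_n\}$ is equicontinuous (uniform continuity of $S$ on a set $[0,\eps]\times\A$) and pointwise precompact (trajectories sit in $\A$), so \cref{ascoli} --- trivially in the discrete-time case --- yields a subsequence $\varphi_{n_k}\to\varphi$ pointwise, where $\varphi$ is a bounded entire solution through $\xi$.

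The common mechanism for transferring the $V$-property to $\varphi$ is the following. At every $t$ with $\varphi(t)\notin\O$ (then the $\varphi_{n_k}(t)$ are eventually outside $\O$ as well, since $\O$ is closed), \ref{v_usc} gives $V(\varphi(t))\le\liminf_{k\to\infty}V(\varphi_{n_k}(t))$, which yields the ``$\le$'' half. For the ``$\ge$'' half, note that $t\mapsto V(\varphi(t))$ is nonincreasing by \ref{v_dec}, hence has a limit $K_+\in\N_0\cup\{\infty\}$ as $t\to\infty$; if $K_+$ already equals the required value (resp.\ is $\ge N_0$) we are done, and otherwise $K_+<\infty$ and $V(\varphi(\cdot))$ is eventually constant, so \ref{v_r} places $\varphi(s)$ in $\cR$ for all sufficiently large $s$. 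Fixing such an $s$, \ref{att_d} upgrades $\varphi_{n_k}(s)\to\varphi(s)$ to $\widetilde d$-convergence along a further subsequence, and then \ref{v_cont} forces $V(\varphi(s))=\lim_k V(\varphi_{n_k}(s))$, which equals the required value (resp.\ is $\ge N_0$) --- contradicting $V(\varphi(s))=K_+$. Together with monotonicity this pins $V(\varphi(\cdot))$ to the required value (resp.\ makes it $\ge N_0$) on all of $\T$.

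For $\S_N$ with $N\neq N^\ast$ and for $\S_{N_0}^+$ it remains to keep $\varphi$ away from $\O$ and to verify $(\alpha(\varphi)\cup\omega(\xi))\cap\O=\emptyset$. Here I invoke \cref{l:sn_bounded_away}: since every time-shift of a witness is again a witness, $\varphi_n(\T)\subseteq\S_N$ (resp.\ $\subseteq\S_{N_0}^+$), which lies in $\A\setminus\widetilde U$ for a neighborhood $\widetilde U$ of $\O$; as $\O$ and $\A\setminus\widetilde U$ are disjoint compacta, $\dist(\varphi_n(t),\O)\ge\rho$ for some fixed $\rho>0$, hence $\dist(\varphi(t),\O)\ge\rho$ for all $t$. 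Consequently $\xi=\varphi(0)\notin\O$, the whole trajectory of $\varphi$ stays in $\X\setminus\O$ (so the mechanism above is legitimate), and $\alpha(\varphi),\omega(\xi)$, being contained in the closed $\rho$-exterior of $\O$, are disjoint from $\O$. Thus $\xi\in\S_N$ (resp.\ $\xi\in\S_{N_0}^+$). When $\O=\emptyset$ these disjointness requirements are vacuous and \cref{l:sn_bounded_away} is not needed.

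The set $\S_{N^\ast}$ requires the most care, and I expect this to be the main obstacle. If $\xi\in\O$ there is nothing to prove, since $\O\subseteq\S_{N^\ast}$ and $\O$ is compact; so assume $\xi\notin\O$, in which case only finitely many $\xi_n$ lie in $\O$ and we may take all $\varphi_n$ as above with $V(\varphi_n)\equiv N^\ast$. The difficulty is that $\S_{N^\ast}$ is \emph{not} bounded away from $\O$, so a priori the limit solution $\varphi$ could reach $\O$ in finite forward time, after which its forced forward orbit stays in $\O$ and $V$ need not be defined there; hence one must show $\varphi(t)\notin\O$ for every $t$. On the half-line $\{t:\varphi(t)\notin\O\}$ --- which, by forward invariance of $\O$ and continuity of $\varphi$, has the form $(-\infty,t_0)$ --- the mechanism of the second paragraph still applies and gives $V(\varphi(t))=N^\ast$; the crux is to exclude $t_0<\infty$. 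This should follow from \ref{v_n*}, part (b): if $\varphi$ reached $\O$ at $t_0$, then for $t$ slightly below $t_0$ it would lie in the neighborhood $U$ of $\O$, and the resulting uniform proximity of the $\varphi_{n_k}$ to $\O$ near $t_0$, together with \ref{v_n*} and the constancy $V(\varphi_{n_k})\equiv N^\ast$, yields a contradiction. Once $\varphi(\T)\subseteq\X\setminus\O$ is established, $\varphi$ is a witness for $\xi$, so $\xi\in\S_{N^\ast}$, completing the proof.
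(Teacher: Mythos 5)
Your overall strategy matches the paper's: extract a pointwise Ascoli limit $\varphi$ of the witnesses $\varphi_{n_k}$, then use \ref{v_usc} for the upper bound on $V(\varphi(\cdot))$ and the chain \ref{v_dec}$\to$\ref{v_r}$\to$\ref{att_d}$\to$\ref{v_cont} for the lower bound, and finally use \cref{l:sn_bounded_away} to control the $\alpha/\omega$-limit sets. For the sets $\S_N$ with $N\neq N^\ast$ and for $\S_{N_0}^+$ your variant is correct, and in one respect arguably cleaner than the paper's: by invoking \cref{l:sn_bounded_away} \emph{at the outset} you obtain $\dist(\varphi(t),\O)\ge\rho>0$ for all $t$, which yields $\varphi(\T)\subseteq\X\setminus\O$ directly, whereas the paper at this point writes ``by the invariance of $\A\setminus\O$, this yields $\varphi(t)\in\A\setminus\O$ for all $t\in\T$'' --- a statement that for $t>0$ is not an immediate consequence of \ref{cond-o-exists} alone (one needs, in effect, that the flow on $\A$ cannot carry a point of $\A\setminus\O$ into $\O$, i.e.\ a backward-uniqueness--type property), and which is true in all the paper's applications but is not explicitly among the hypotheses. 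You sidestep that issue for $N\neq N^\ast$ and for $\S_{N_0}^+$.

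For $\S_{N^\ast}$, however, your proposal has a genuine gap, and you have in fact located exactly the point where the paper's argument is thinnest. You correctly observe that $\S_{N^\ast}$ is \emph{not} bounded away from $\O$, so \cref{l:sn_bounded_away} gives you nothing, and a priori the Ascoli limit $\varphi$ could enter $\O$ at some finite $t_0>0$. The paper dispatches this with the same one-line appeal to invariance of $\A\setminus\O$. Your attempted patch via \ref{v_n*} part (b) does not close the gap. First, \ref{v_n*} is a statement about entire solutions valued in $\X\setminus\O$; if $\varphi$ does enter $\O$, \ref{v_n*} cannot be applied to $\varphi$. Second, if you instead try to apply it to the approximating $\varphi_{n_k}$, the hypothesis of \ref{v_n*}(b) requires $\varphi_{n_k}(t)\in\overline U\cap\A$ for \emph{all} $t\ge t_1$, and pointwise convergence to $\varphi$ only places $\varphi_{n_k}$ near $\O$ on a bounded window near $t_0$. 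Third, and most decisively, even if \ref{v_n*}(b) applied to $\varphi_{n_k}$ it would conclude $V(\varphi_{n_k}(\cdot))\ge N^\ast$, which is perfectly consistent with the constancy $V(\varphi_{n_k})\equiv N^\ast$; likewise \ref{v_n*}(a) would give $\le N^\ast$, again consistent. No contradiction can be manufactured from \ref{v_n*} together with $V\equiv N^\ast$, since $N^\ast$ is exactly the threshold value. So the step ``this should follow from \ref{v_n*}(b)'' is not a proof, and excluding $t_0<\infty$ requires a different argument (for example, an explicit hypothesis that no two distinct points of $\A$ map to the same point under $S(t,\cdot)$, or a backward-uniqueness assumption on $\A$, as holds in all cited applications), not a consequence of what you have invoked.
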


\begin{proof}
	Let $(\xi_n)_{n =1}^{\infty}$ be a sequence in $\S_N$ for some $N \in \N$, such that $\lim_{n \to \infty} \xi_n=\xi$ for some $\xi \in \A$. We claim that $\xi \in \S_N$. 
    
	If $\xi\in\O$, then \cref{l:sn_bounded_away} yields $N=N^*$. By definition of $\S_{N^*}$, $\O\subseteq\S_{N^*}$ also holds, so the statement is proved in this particular case. 
    
	Assume now that $\xi\not\in\O$. By the definition of $\S_N$ and using $\xi_n \in \S_N$, there exist bounded entire solutions $\varphi_n\colon \T \to \X$ through $\xi_n$, such that $\varphi_n(t) \in \S_N$ and $V(\varphi_n(t))=N$ for all $n \in \N$ and $t \in \T$. The compactness of $\A$ and \cref{ascoli} yield a subsequence $\varphi_{n_k}$ and a solution $\varphi\colon \T \to \X$ such that $\varphi_{n_k}(t) \to \varphi(t)$ for every $t \in \T$ as $k \to \infty$. 
    
	Necessarily, $\varphi(0)=\xi$, and by the invariance of $\A\setminus\O$, this yields  $\varphi(t) \in \A \setminus \O$ for all $t \in \T$. Applying \ref{v_r}, we obtain $s_1, s_2 \in \T$ with $s_1 < 0 < s_2$ and $\varphi(s_1),\varphi(s_2)\in\cR$, such that
	\begin{equation*} 
		V(\varphi(s_1)) =\lim_{t \to -\infty} V(\varphi(t)), \qquad V(\varphi(s_2))= \lim_{t \to \infty} V(\varphi(t)).
	\end{equation*}
	 On the other hand, using \ref{att_d}, we may assume -- after passing to a subsequence if necessary~-- that
    $$\widetilde{d}(\varphi_{n_k}(s_1), \varphi(s_1)) \to 0\quad\text{and}\quad\widetilde{d}(\varphi_{n_k}(s_2),\varphi(s_2)) \to 0\quad\text{as }k \to \infty,$$
    so \ref{v_cont} implies $V(\varphi(s_1))=V(\varphi(s_2))=N$. Therefore, we obtain $V(\varphi(t))=N$ for all $t \in \T$. 
    
	If $N=N^*$ or $\O=\emptyset$, then $\xi \in \S_N$ is already established. 
    
	Otherwise, it remains to prove that $(\alpha(\varphi) \cup \omega(\xi))\cap\O=\emptyset$.  In this case, \cref{l:sn_bounded_away} gives an open neighborhood $\widetilde{U}$ of $\O$ in $\A$ such that $\varphi_n(t) \in \A \setminus \widetilde{U}$  for all $t \in \T$ and $n \in \N$. Hence,
	\begin{equation*}
		\varphi(t) \in \overline{\A \setminus \widetilde{U}} = \A \setminus \widetilde{U}
	\end{equation*}
	 for all $t \in \T$, which leads to $\alpha(\varphi) \cup \omega(\xi) \subseteq \A\setminus \widetilde{U}$. This shows that $\alpha(\varphi) \cup \omega(\xi)\cap\O=\emptyset$, completing the proof of the claim.

    Let us now assume  that $N_0>N^\ast$, $\xi_n\in\S_{N_0}^+$ with $\lim_{n\to\infty}\xi_n=\xi\in\A$, and $\varphi_n$ is a solution through $\xi_n$ for all $n\in\N$. Then $V(\xi_n)\ge N_0$ for all $n\in\N$.

    As shown earlier, using \cref{ascoli}, we may suppose that there exists a bounded entire solution $\varphi$ with $\varphi(0)=\xi$ such that $\varphi_n\to\varphi$. We have to show that $V(\varphi(t))\ge N_0$ for all $t\in\T$.

    Assume indirectly that there exists $t_0\in\T$ with $V(\varphi(t_0))<N_0$. Then \ref{v_r} implies the existence of $T>0$ such that
    $$\lim_{t\to\infty}V(\varphi(t))=V(\varphi(T))\quad\text{and}\quad\varphi(T)\in\cR.$$
    By \ref{att_d}, we may suppose that $\lim_{n\to\infty}\widetilde{d}(\varphi_n(t),\varphi(t))=0$ and hence, we conclude
    $$N_0>V(\varphi(T))=V(\varphi_n(T))\ge N_0,$$
    contradicting the statement.
\end{proof}

Now, we are ready to prove our main theorem.

\begin{proof}[{\bf Proof of \cref{th:morse}}]
	By definition, the sets $\M_n$ are pairwise disjoint and invariant. Since $\A$ is compact, \cref{l6} yields that $\M_n$ is compact for all possible $n$. It remains to prove that these sets satisfy the Morse properties \ref{m1}, \ref{m2}, that is, for every $\xi \in \A$ and any bounded entire solution $\varphi\colon \T \to \X$ through $\xi$, there exist $N$ and $K$ with $N \leq K$ such that $\alpha(\varphi) \subseteq \M_K$, $\omega(\xi) \subseteq \M_N$, and in case $N=K$, $\xi \in \M_N$, thus $\varphi(t)  \in \M_N$ for all $t \in \T$. If $\O=\emptyset$, this follows directly from \cref{l1}, so we may assume $\O\neq \emptyset$ for the remainder of the proof.
    
    If $\xi\in \O$, then invariance and compactness of $\O$, together with the definition of $\M_{N^\ast}$, imply that both Morse properties hold.
    In order to show this for $\xi \in \A \setminus \O$ as well, let $\varphi\colon \T \to \X$ be a bounded entire solution through $\xi$. Furthermore, define
	\begin{equation*}
		K:=\lim_{t \to -\infty} V(\varphi(t)) \quad \text{ and } \quad N:=\lim_{t \to \infty} V(\varphi(t)).
	\end{equation*}
	Note that from the monotonicity of $V$, we obtain  $N \leq K$.
    
	First, observe that if $N=N^*$, then $\omega(\xi) \subseteq \M_{N^*}$. In order to prove this, choose $\eta \in \omega(\xi)$ arbitrarily. If $\eta\in\O$, then $\eta \in \M_{N^*}$ by definition, thus we may assume $\eta \notin \O$. By \cref{l1}, $V(\eta)=N^*$. Moreover, by the invariance of $\omega(\xi) \setminus \O$, there exists an entire solution $\psi\colon \T \to \omega(\xi) \setminus \O $ through $\eta$, for which \cref{l1} yields  $V(\psi(t))=N^*$ for all $t \in \T$, so $\eta \in \M_{N^*}$.

	A similar argument shows that $K=N^*$ implies  $\alpha(\varphi) \subseteq \M_{N^*}$.
    
	Next, we distinguish four cases based on the values of $N$ and $K$.
    \medskip
    
	\noindent$\textbf{Case  1.}$ If $N=K=N^*$, then $\alpha(\varphi) \cup \omega(\xi) \subseteq \M_{N^*}$ by our previous observation. Moreover, from the monotonicity of $V$, it follows that $V(\varphi(t))=N^*$ on $\T$. This implies $\xi \in \M_{N^*}$, thus both Morse properties hold.
    \medskip
    
	\noindent$\textbf{Case 2.}$ Assume $N=N^*<K$. As  shown above, $\omega(\xi) \subseteq \M_{N^*}$. Moreover, observe that $\alpha(\varphi) \not\subseteq \O$; otherwise property \ref{v_n*} would imply $V(\varphi(t)) \leq N^*$ for all $t \in \T$, and thus $K \leq N^* =N$, which contradicts $K>N$. Therefore, \cref{l:v_n} can be applied to obtain $\alpha(\xi) \subseteq \M_K$, so property \ref{m1} is fulfilled. Property \ref{m2} follows automatically since the two Morse sets in question, i.e. $\M_{N^*}$ and $\M_K$, are distinct.
    \medskip
    
	\noindent$\textbf{Case 3.}$ A similar argument applies in the case when $N <N^* =K$.
   \medskip
    
	\noindent$\textbf{Case 4.}$ In the case $N \neq N^* \neq K$, let
    $$K_1\coloneq \begin{cases}
        K,&\text{if }K<N_0,\\
        N_0,&\text{otherwise,}
    \end{cases}\qquad N_1\coloneq \begin{cases}
        N,&\text{if }N<N_0,\\
        N_0,&\text{otherwise.}
    \end{cases}$$
    Then \cref{l:v_n} yields that either $\omega(\xi)\subseteq\O$ or $\omega(\xi) \subseteq \M_{N_1}$; similarly, either $\alpha(\varphi)\subseteq\O$ or $\alpha(\varphi) \subseteq \M_{K_1}$. Note that $\omega(\xi)$ and $\alpha(\varphi)$ cannot both be in $\O$, since property \ref{v_n*} would then imply $V(\varphi(t)) \equiv N^*$ on $\T$, contradicting  $N_1 \neq N^* \neq K_1$.
    
	If $\omega(\xi) \not\subseteq \O \not\supseteq \alpha(\varphi)$, then \cref{l:v_n} yields  $\omega(\xi) \subseteq \M_{N_1}$ and $\alpha(\varphi) \subseteq \M_{K_1}$, so \ref{m1} is fulfilled. If $N_1=K_1$, then $V(\varphi(t))=N =K$ ($V(\varphi(t))\geq N_0$) for all $t \in \T$ in case of $K_1=N_1<N_0$ ($K_1=N_1 = N_0$). Now, our assumption on the limit sets with \cref{l2} ensure that $(\alpha(\varphi) \cup \omega(\xi))\cap\O=\emptyset$, thus $\varphi(t) \in \M_{N_1}$ also holds for all $t \in \T$. This establishes property \ref{m2}.
    
	If $\omega(\xi)\subseteq\O \not\supseteq \alpha(\varphi)$, then $\omega(\xi) \subseteq \M_{N^*}$ by definition. Furthermore, property \ref{v_n*} implies $V(\varphi(t)) \geq N^*$ for all $t \in \T$, and consequently $N^* < N_1 \leq K_1$. On the other hand,  \cref{l:v_n} yields  $\alpha(\varphi) \subseteq \M_{K_1}$, so \ref{m1}  holds again, and the \ref{m2} is automatically fulfilled.
    
	An analogous argument applies to the case  $\omega(\xi) \not\subseteq\O\supseteq \alpha(\varphi)$.
    
	We have now taken all possible cases into consideration, so the proof is complete. 
\end{proof}

\section{Applications}\label{sec:examples}

In the first part of this section, we present the second main result of the paper: in \cref{subsec:sdde}, we study a cyclic system of differential equations with a certain class of state-dependent delays, to which \cref{th:morse} can be applied to obtain a Morse decomposition of the global attractor, thanks to recent findings from our paper \cite{balazs:garab:25} on a discrete Lyapunov function for DDEs with time-variable delays.

\subsection{A cyclic system of differential equations with state-dependent delay}\label{subsec:sdde}

Let $N$ be a nonnegative integer, and consider the system
\begin{equation}\label{eq:sdx}
\begin{aligned}
    \dot x^i(t)&=f^i(x^i(t),x^{i+1}(t)),&\quad& i\in\{0,\dots, N-1\},\\
    \dot x^N(t)&=f^N(x^N(t),x^0(t-\tau(x_t)))&&
\end{aligned}
\end{equation}
under the following hypotheses:
\begin{enumerate}[label=(S$_\arabic*$), leftmargin=*]
	\item \label{hyp_delta_aut} Each $f^i$ is $C^1$-smooth on $\R^2$, and fulfills the feedback assumptions
		\begin{align*}
				vf^i(0,v)&>0,
             & D_2 f^i(0,0)&>0,&i\in \{0,\dots, N-1\},\\
			\delta vf^N(0,v)&>0,
               &\delta D_2 f^N(0,0)&>0,&
		\end{align*}
		 for all $v\neq 0$, where $\delta\in\{\pm1\}$ and $D_2$ stands for differentiation w.r.t.\ the second argument.
    \item \label{hyp_dissip}  There exists $M>0$ such that 
\begin{alignat}{3}
	f^i(u,v)&<0 &&\quad  \text{if } u\geq M \text{ and } |v|\leq u\label{aut_dissip_1}\\
\shortintertext{and}
	f^i(u,v)&>0 &&\quad \text{if } u\leq -M \text{ and } |v|\leq |u|\label{aut_dissip_2}
\end{alignat}
for all $0\leq i\leq N$.
\end{enumerate}

Consider the Banach space $C(\K,\R)$ equipped with the maximum norm:
$$\|\varphi\|=\max_{s\in\K}|\varphi(s)|,\quad\varphi\in  C(\K,\R),$$
where $\K\coloneq [-r ,0]\cup\{0,\ldots,N\}$ for some positive constant $r $, and let $x_t\in C(\K,\R)$ be defined by
$$x_t(s)=\begin{cases}
    x^0(t+s),&\text{if } s \in [-r , 0],\\
    x^s(t),& \text{if } s\in \{1,\dots,N\}.
\end{cases}$$

Let 
\[L_0\coloneq \max \{|f^i(u,v)| : (u, v) \in [-M, M]^2,\ i\in \{0,\dots,N\}\} ,\]
and set the phase space 
\begin{equation*}
	\X \coloneq \{\varphi \in C(\K,\R) \colon \|\varphi\| < M \text{ and $\varphi$ is $L_0$-Lipschitz}\}.
\end{equation*}

Let $d$ be the metric on $\X$ induced by the maximum norm. Moreover, let 
\[\widetilde{\X}\coloneq \{\varphi \in \X : \mbox{the restriction } \varphi|_{[-r ,0]} \mbox{ is continuously differentiable}\}\]
with the norm 
\[\|\varphi\|_{1}\coloneq \|\varphi\|+ \sup_{s\in[-r ,0]}|\dot{\varphi}(s)|,
\]
and let $\widetilde{d}$ be the metric induced by this norm.

Assume further that $\tau\colon  \X \to \R$ is defined by
        \begin{equation}
            \int_{-\tau(\varphi)}^0 \alpha(\varphi(s))\intd s=1,\label{eq:thr_delay}
        \end{equation}
        where $\alpha\colon\R\to [\alpha_1,\alpha_2]$ is continuous and $\alpha_1\coloneq 1/r  \leq \alpha_2$. Then $\tau(\varphi)$ is well-defined  and bounded by 
        $$ \frac{1}{\alpha_{2}}  \leq \tau (\varphi) \leq r $$
for all $\varphi\in \X$ (cf.\ \cite[Proposition\ 6.1]{bartha:garab:krisztin:25}). 

For the sake of simplicity, we also make the following technical assumption.
\begin{enumerate}[label=(D$_{\arabic*}$)]
    \item  There exist $\alpha_0$ and $\eps>0$ such that $\alpha(x)=\alpha_0$ for all $|x|<\eps$. \label{delay_const_near_0}
\end{enumerate}

It is known that $\varphi\mapsto \tau(\varphi)$ is globally Lipschitz \cite[Section~4.1]{balazs:garab:25}. One can easily prove, analogously to the scalar case (cf.\ Propositions 2.1 and 2.4 in \cite{bartha:garab:krisztin:25}), that for any $\varphi\in \X$, there exists a unique forward solution $x^{\varphi}\colon [0,\infty)\to \R^{N+1}$ such that $x^{\varphi}_0=\varphi$, and $(t,\varphi)\mapsto S(t,\varphi)\coloneq x^{\varphi}_t$ defines a continuous semi-dynamical system that has a connected global attractor $\A \subset \widetilde{\X}$. Moreover, for any fixed solution $x\colon [t_0,\infty)\to \R^{N+1}$, the mapping $t\mapsto \eta(t)\coloneq t-\tau(x_t)$ is strictly increasing. 

	Let $\O\coloneq \{\bf 0\}$, where ${\bf 0} \in \X$ denotes the constant zero function. For $\varphi\in \X \setminus \O$ and $a\in [-r ,0)$, introduce the notation
	\begin{align*}
		V^+(\varphi,a)&=\begin{cases}
			\signc(\varphi,a),&\text{if }\signc(\varphi,a)\text{ is even or infinite,}\\
			\signc(\varphi,a)+1,&\text{if }\signc(\varphi,a)\text{ is odd,}
		\end{cases}\\
		V^-(\varphi,a)&=\begin{cases}
			\signc(\varphi,a),&\text{if }\signc(\varphi,a)\text{ is odd or infinite,}\\
			\signc(\varphi,a)+1,&\text{if }\signc(\varphi,a)\text{ is even,}
		\end{cases}
	\end{align*}
	where $\signc(\varphi,a)$ denotes the number of sign changes of  $\varphi$ on the set $[a,0]\cup\{1,\dots,N\}$, that is,
	\begin{align}
		\begin{aligned}
			\signc(\varphi,a)=\sup\{k\in \N:{}&\text{there exist }\theta_i\in [a,0]\cup\{1,\dots,N\},\ 0\le i\le k,\\
			&\text{with }\theta_i<\theta_{i-1}\text{ and }\varphi(\theta_i)\varphi(\theta_{i-1})<0\text{ for }1\le i\le k\},
		\end{aligned}\label{def:sc}
	\end{align}
	assuming here $\sup\emptyset=0$. Let $\delta\in\{\pm1\}$ be fixed so that $f$ satisfies \ref{hyp_delta_aut}. For $N\geq 1$ and $a\in [-r ,0)$,  introduce the sets
    \begin{align*}
		\cS^0&\coloneq\Bigl\{\varphi\in \widetilde{\X}:\text{if }\varphi(0)=0\text{, then }\dot\varphi(0)\varphi(1)>0\Bigr\},\\
		\cS_a&\coloneq\Bigl\{\varphi\in \widetilde{\X}:\text{if }\varphi(a)=0\text{, then }\delta\varphi(N)\dot\varphi(a)<0\Bigr\},\\
		\cS^*_a&\coloneq\Bigl\{\varphi\in \widetilde{\X}:\text{if }\varphi(s)=0\text{ for some }s\in[a,0]\text{, then }\dot\varphi(s)\ne0\Bigr\},\\
		\cS^N_a&\coloneq\Bigl\{\varphi\in \widetilde{\X}:\text{if }\varphi(N)=0\text{, then }\delta\varphi(N-1)\varphi(a)<0\Bigr\},\\
        \intertext{furthermore, for $N\ge 2$ and $1\le i\le N-1 $, let} 
    	\cS^i&\coloneq\Bigl\{\varphi\in \widetilde{\X}:\text{if }\varphi(i)=0\text{, then }\varphi(i-1)\varphi(i+1)<0\Bigr\},\\
	\shortintertext{and finally, let} 
	\cR_a&\coloneq \cS_a\cap\Biggl(\bigcap_{i=0}^{N-1} \cS^i\Biggr)\cap \cS^N_a\cap \cS^*_a \subseteq \widetilde{\X}\setminus \O.
    \end{align*}

    For clarity, in the case $N=0$ (scalar case), we only define the sets $\cS_a$, $\cS^*_a$, and $\cS^0$, where in the latter set we replace $\varphi(1)$ by $\varphi(a)$, and set $\cR_a=\cS^0\cap \cS_a \cap \cS^*_a$.
    
    Let us introduce the notation 
    \[\cR \coloneq \Bigl\{\varphi \in \widetilde{\X} : \varphi \in \cR_{-\tau(\varphi)}\Bigr\},\]
    and for an arbitrary $\varphi\in \X \setminus \O$ let 
    \begin{align}
        V(\varphi)\coloneq\begin{cases}
		V^+(\varphi,-\tau(\varphi)),&\text{if }\delta=1,\\
		V^-(\varphi,-\tau(\varphi)),&\text{if }\delta=-1.
	\end{cases}\label{v_pm}
    \end{align}

Furthermore, we introduce the linearization of \eqref{eq:sdx} at $\0$, namely
\begin{equation}\label{linearized_eq}
\begin{aligned}
\dot{x}^i(t)&=\mu^i x^i(t) + \gamma^i x^{i+1}(t),&\quad 0\leq i\leq N-1\\
\dot{x}^N(t)&=\mu^N x^N(t) + \gamma^N x^0(t-\tau(\0)),&
\end{aligned}
\end{equation}
where $\mu^i= D_1 f^i(0,0)$, and $\gamma^i=D_2 f^i(0,0)$. Then $\delta\gamma^N>0$ and $\gamma^i\in (0,\infty)$ for all $0\leq i\leq N-1$. The eigenvalues of \eqref{linearized_eq} are precisely the numbers $\lambda\in\C$ that solve the characteristic equation 
\begin{equation}\label{char_eq}
\prod_{i=0}^N (\lambda-\mu^i) - e^{-\lambda\tau(\0)}\prod_{i=0}^N \gamma^i=0.
\end{equation}

Let us denote by $M^\ast$  the number of complex roots of  \eqref{char_eq} (counting with multiplicities)  that have strictly positive real part. Note that, according to \cite[Lemma~7.4]{mallet-paret:sell:96-lyapunov}, $M^\ast$ is always a finite number. Moreover, let
\begin{align*}
N^\ast&\coloneq\begin{cases}
M^\ast+1,&\mbox{if the origin is nonhyperbolic and $M^\ast$ is odd},\\
M^\ast,&\mbox{otherwise},
\end{cases}\\
\intertext{in the case of positive feedback, i.e.\ $\delta=1$, and}
N^\ast &\coloneq\begin{cases}
M^\ast+1,&\mbox{if the origin is nonhyperbolic and $M^\ast$ is even},\\
M^\ast,&\mbox{otherwise},
\end{cases}
\end{align*}
in the case of negative feedback, i.e.\ $\delta=-1$.

\begin{theorem}\label{thm:sdde}
Using the above definitions and notations, conditions \emph{\ref{cond-o-exists}--\ref{att_d}} and \emph{\ref{v_usc}--\ref{v_n*}} are satisfied and a Morse decomposition of the global attractor of system \eqref{eq:sdx}--\eqref{eq:thr_delay} given in \cref{th:morse} exists.
\end{theorem}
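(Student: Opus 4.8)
The statement has two halves: that the abstract hypotheses of \cref{th:morse} hold for the data $\X,\widetilde\X,\O=\{\0\},\cR,V$ introduced above, and that the Morse decomposition then exists; the second half is an immediate application of \cref{th:morse}, so the whole task is to verify \ref{cond-o-exists}--\ref{att_d} and \ref{v_usc}--\ref{v_n*}. I would begin with the topological hypotheses. Conditions \ref{cond-o-exists} and \ref{cond-x-tilde} are essentially immediate from the construction: \ref{hyp_delta_aut} and $C^1$-smoothness force $f^i(0,0)=0$ for all $i$, so $\0$ is an equilibrium, $S(t,\O)=\O$ for $t\in\T_+$, and $\O=\{\0\}\subseteq\A$; every bounded entire solution has each component $x^i$ of class $C^1$ (it solves an ODE, respectively the DDE, with continuous right-hand side), so its segments lie in $\widetilde\X$ and $\A\subseteq\widetilde\X$, while $\widetilde\X$ is positively invariant by the regularity of forward solutions. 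Since $\|\cdot\|\le\|\cdot\|_1$ on $\widetilde\X$, we get $d\le\widetilde d$ there, which is \ref{cond-finer}. For \ref{att_d}: on $\A$ one has the uniform bound $|\dot x^0|\le L_0$ and, differentiating $t\mapsto f^0(x^0(t),x^1(t))$ once more, a uniform Lipschitz bound on $\dot x^0$ (using that $D_1f^0$, $D_2f^0$, and $\dot x^1$ are bounded on the compact range of solutions in $\A$); \cref{ascoli}, applied to the family of these derivatives on $[-r,0]$, then shows that $\A$ is compact in $(\widetilde\X,\widetilde d)$, whence any $d$-convergent sequence in $\A$ has a $\widetilde d$-convergent subsequence whose limit, being also a $d$-limit by \ref{cond-finer}, coincides with the original one.

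For the discrete Lyapunov function I would first record the reduction that tames the state-dependence near $\0$: by \ref{delay_const_near_0} together with \eqref{eq:thr_delay}, $\alpha\equiv\alpha_0$ on $\{|x|<\eps\}$ forces $\tau(\varphi)=1/\alpha_0$ whenever $\|\varphi\|<\eps$; in particular $\tau(\0)=1/\alpha_0$ and the derivative of $\tau$ at $\0$ vanishes, which is why \eqref{linearized_eq} carries the constant delay $\tau(\0)$. Differentiating \eqref{eq:thr_delay} along a solution gives $\eta'(t)=\alpha(x^0(t))/\alpha(x^0(\eta(t)))>0$, so $t\mapsto\eta(t)=t-\tau(x_t)$ is strictly increasing --- this is the standing hypothesis under which the variable-delay sign-change theory of \cite{balazs:garab:25} applies. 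Granting it, properties \ref{v_usc} (lower semicontinuity in the maximum norm), \ref{v_cont} (continuity on $\cR$ in the $C^1$-norm, $\cR_a$ being the regularity set there), \ref{v_dec} (monotonicity of the sign-change count along solutions of a cyclic feedback system), and \ref{v_r} (eventual entry of the trajectory into $\cR$, with $T$ taken at least $r$) are precisely the properties of $V$ established in \cite{balazs:garab:25} --- the case $\delta=1$, respectively $\delta=-1$, matching the choice $V^+$, respectively $V^-$, in \eqref{v_pm} --- combined, for the cyclic count, with the classical cyclic-feedback oscillation theory.

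The substantive step is \ref{v_n*}, where $\O=\{\0\}\neq\emptyset$ and one must produce the integer $N^\ast$ defined above and an open neighbourhood $U$ of $\0$. I would take $U=\{\varphi\in\X:\|\varphi\|<\eps\}$, or smaller, so that $\tau\equiv1/\alpha_0$ on $U$ and the restriction of \eqref{eq:sdx} to trajectories staying in $U$ is the \emph{constant-delay} cyclic DDE with linearisation \eqref{linearized_eq} and characteristic equation \eqref{char_eq}, shrinking $U$ further so that every bounded entire solution staying in $\overline U$ is either $\equiv\0$ or lies on the local centre manifold of that constant-delay equation. The proof of \ref{v_n*} then rests on two inputs. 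The first is the oscillation theory for the cyclic characteristic equation from Mallet-Paret--Sell \cite{mallet-paret:sell:96-lyapunov}: the roots of \eqref{char_eq} can be ordered by decreasing real part, the value of the sign-change functional on successive real generalised eigenspaces increases in steps of two, and $N^\ast$ --- with its parity correction in the nonhyperbolic case --- is exactly the threshold with $V\le N^\ast$ on the eigenspaces with positive real part, $V\ge N^\ast$ on those with negative real part, and the centre eigenspace, if present, consistent with both requirements. The second is the asymptotics near $\0$: if $\varphi\colon\T\to\X\setminus\O$ satisfies $\varphi(t)\in\overline U\cap\A$ for all $t\le t_0$, then (using invariance of $\A$) $\varphi$ is a bounded entire solution with $\alpha(\varphi)\subseteq\overline U$, hence $\varphi$ approaches $\0$ as $t\to-\infty$ along the local unstable manifold (or, in the nonhyperbolic case, along the centre-unstable manifold), its leading term being a real combination of eigenfunctions for eigenvalues with positive (respectively zero) real part; since the normalised such eigenfunctions are regular and $V$ is continuous at them by \ref{v_cont}, $\lim_{t\to-\infty}V(\varphi(t))$ equals the value of the functional on the corresponding eigenspace, hence is $\le N^\ast$, and \ref{v_dec} propagates this to $V(\varphi(t))\le N^\ast$ for all $t$, which is part (a). Part (b) is the mirror statement with $t\to+\infty$, $\omega(\xi)$ in place of $\alpha(\varphi)$, the local centre-stable manifold, eigenvalues with negative (respectively zero) real part, and the inequality reversed.

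The main obstacle is the nonhyperbolic case --- most sharply in part (b): when $\0$ has a nonzero centre subspace, a solution approaching $\0$ forward in time may be slaved to the centre manifold, so its oscillation is no longer read off from a single exponential rate, and one must verify that it still produces at least $N^\ast$ sign changes, the parity correction in the definition of $N^\ast$ being exactly what makes this bound sharp. Carrying this out requires a centre-manifold reduction for the constant-delay cyclic equation, the structure of the characteristic roots near the imaginary axis, and a careful sign-change count of the relevant centre eigenfunctions, paralleling the corresponding step of the scalar treatment in \cite{bartha:garab:krisztin:25} and of Mallet-Paret--Sell's original argument; this is where essentially all the technical work lies. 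Once \ref{v_n*} is established, hypotheses \ref{cond-o-exists}--\ref{att_d} and \ref{v_usc}--\ref{v_n*} all hold, and \cref{th:morse} delivers the Morse decomposition of the global attractor of \eqref{eq:sdx}--\eqref{eq:thr_delay}.
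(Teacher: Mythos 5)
Your verification of \ref{cond-o-exists}--\ref{att_d} and \ref{v_usc}--\ref{v_r} matches the paper's in spirit: the paper simply cites \cite[Lemma~2.2, Lemma~2.1, Theorems~3.3--3.4]{balazs:garab:25} and \cite[Lemma~4.3]{polner:02} for these, while you supply more of the underlying reasoning, which is fine. The substantive divergence is in \ref{v_n*}, and there the approach you sketch has a real gap.

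You propose an invariant-manifold route: shrink $U$ so that bounded entire solutions staying in $\overline U$ lie on the local centre manifold, expand $\varphi$ as $t\to\pm\infty$ with a leading eigenfunction term, and read $V$ off that leading term. Two things go wrong. First, the set-up is not right: condition \ref{v_n*} concerns a solution $\varphi$ that stays in $\overline U\cap\A$ only for $t\le t_0$ (part (a)) or only for $t\ge t_1$ (part (b)), not for all $t\in\T$, so there is no centre-manifold reduction available; you would need centre-unstable (resp.\ centre-stable) manifolds and, more seriously, a theorem that $\varphi$ actually lies on one --- the centre-unstable \emph{set} is in general larger than the manifold. Second, even granting that, the leading-term asymptotics you invoke requires that $\varphi$ does not decay superexponentially (otherwise there is no eigenfunction leading term at all), and you never establish this; you acknowledge the difficulty only as "the main obstacle" in the nonhyperbolic case, but in fact the whole argument rests on it, hyperbolic or not. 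That is precisely where the hard work is, and it is left undone.

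The paper circumvents invariant-manifold theory entirely with a normalization/compactness argument of Mallet-Paret--Sell type. For part (a), given a sequence of entire solutions $x^k$ with $\sup_{t\le0}\|x^k_t\|\to0$ and $V(x^k_t)>N^\ast$, one sets $z^k(t)=x^k(t)/\|x^k_0\|$ after shifting time so that $\|x^k_0\|>\frac12\sup_{t\le0}\|x^k_t\|$. Each $z^k$ solves a linear nonautonomous cyclic system with coefficients $a^k_i,b^k_i$ converging to the linearization's; Arzel\`a--Ascoli plus Cantor diagonalization produce a limit $z$, which is a nontrivial bounded solution of the constant-delay linearized equation \eqref{linearized_eq} on $(-\infty,0]$. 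Then \cite[Proposition~3.8]{garab:20} gives $V^\pm(z_t,-\tau(\0))\le N^\ast$, and the regularity time $T$ from \ref{v_r} plus \ref{v_cont} transfers this bound back to $x^{k_l}$ for large $l$, yielding a contradiction. This sidesteps superexponential decay: the normalization $\|z^k_0\|=1$ forces nontriviality of the limit directly. For part (b) the normalization $z^k$ is a priori only bounded at $t=0$ and one does need a uniform backward growth bound $\|x^k_t\|\le C^{m+1}\|x^k_{t_0}\|$ for $t\in[t_0-r,t_0]$ to get boundedness of $z^k_t$ for $t\ge0$; the paper obtains it from \cite[Proposition~2.8, Lemma~2.9]{garab:20} for the constant-delay equation, and this is exactly the point where condition \ref{delay_const_near_0} is used (your observation that $\tau\equiv1/\alpha_0$ near $\0$ is the right reason why). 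So the difficulty you isolate is real; the paper's proof confronts it by reducing to the constant-delay growth estimates of \cite{garab:20} inside the normalization argument, rather than via centre-manifold asymptotics, and that is the missing ingredient in your proposal.
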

\begin{proof} We check all the conditions one-by-one below.
\medskip

\noindent \textbf{\ref{cond-o-exists}} is shown in \cite[Lemma~2.2\,(iii)]{balazs:garab:25}, where $\eta(t)\coloneq t-\tau(x_t)$. 
\medskip

\noindent\textbf{\ref{cond-x-tilde}--\ref{cond-finer}}  hold trivially.
\medskip

\noindent\textbf{\ref{att_d}} The state-dependency has no role in this statement, so the argument presented in \cite[Lemma 4.3]{polner:02} applies with straightforward modifications (cf.\ also \cite[Lemma~3.14]{garab:20}).
\medskip

\noindent\textbf{\ref{v_usc}--\ref{v_cont}} follow directly from \cite[Lemma~2.1]{balazs:garab:25}.
\medskip

\noindent\textbf{\ref{v_dec}--\ref{v_r}} are direct applications of \cite[Theorems~3.3--3.4]{balazs:garab:25}, respectively.
\medskip

\noindent \textbf{\ref{v_n*}}  The proof of this condition builds on the relation of equation \eqref{eq:sdx} and its linearization around the trivial solution, following the argument of  \cite[Lemma~4.4]{bartha:garab:krisztin:25} (or \cite[Lemma~3.9]{polner:02}).

The proofs of both statements of \ref{v_n*} are quite similar. However, a significant difference is that statement (b) requires a uniform exponential growth bound in backward time. This is made possible by the technical condition \ref{delay_const_near_0}, which allows the application of results from \cite{garab:20} for constant delays. 
\medskip

    (a) We prove this part by contradiction. Assume that there exist a sequence $(\varphi_k)_{k=1}^\infty\subset\A\setminus\{\0\}$ and bounded entire solutions $x^k$ of \eqref{eq:sdx} through $\varphi_k$ such that
    \begin{equation}
        \sup_{t\le0}\|x_t^k\|\to0\quad\text{as }k\to\infty,\label{sup_x}
    \end{equation} and $V(x^k_{t_k})>N^*$ for all $k\in\N$, for some $t_k\in\R$, where $x^k=x^{\varphi_k}$ is a vector $x^k(t)=(x^{k,0}(t),\ldots,x^{k,N}(t))$.

    Let $s_k\le 0$ be such that $\|x^k_{s_k}\|>\frac{1}{2}\sup_{t\le 0}\|x^k_t\|$. Using \cite[Theorem 3.3]{balazs:garab:25} and the fact that system \eqref{eq:sdx} is autonomous, we may suppose that $\|x^k_0\|>\frac{1}{2}\sup_{t\le 0}\|x^k_t\|$ and $V(x^k_t)>N^*$ for all $t\le 0$.

    By \cite[Section~2.3]{balazs:garab:25}, $x^k$ solves the nonautonomous linear system
    \begin{alignat*}{3}
        \dot x^{k,i}(t)&=a^k_i(t)x^{k,i}(t)+b^k_i(t)x^{k,i+1}(t),&\qquad& i\in\{0,\dots, N-1\},\\
        \dot x^{k,N}(t)&=a^k_N(t)x^{k,N}(t)+b^k_N(t)x^{k,0}(t-\tau(x^k_t)),&&
    \end{alignat*}
    where $a^k_i$ and $b^k_i$ are defined by
    \begin{equation}\label{def-a-b}
        a^k_i(t)=\int_0^1D_1f^i(hx^{k,i}(t),x^{k,i+1}(t))\intd h,\qquad b^k_i(t)=\int_0^1D_2f^i(0,hx^{k,i+1}(t))\intd h
    \end{equation}
    with $x^{k,N+1}(t)\coloneq x^{k,0}(t-\tau(x^k_t))$. It follows that $a^k$ and $b^k$ are uniformly bounded, and $b^k$ is uniformly bounded away from zero. Since $\|x^k_t\|\to0$ for all $t\le0$ as $k\to\infty$, $f^i$ is continuously differentiable for all $i$, and $\tau\colon \X\to\R$ defined by \eqref{eq:thr_delay} is continuous, we obtain
    \begin{align}
    \lim_{k\to\infty}a^k_i(t)=\mu^i=D_1f^i(0,0),\quad\lim_{k\to\infty}b^k_i(t)=\gamma^i=D_2f^i(0,0),\quad\text{and}\quad\lim_{k\to\infty}\tau(x^k_t)=\tau(\0)\label{lim_a_b_tau}
    \end{align}
    for all $t\le0$ and $i\in\{0,\dots,N\}$.

    Let
    \begin{equation}\label{z-def}
    z^k(t)\coloneq\frac{x^k(t)}{\|x^k_0\|},\quad t\in\R.
    \end{equation}
    Then
    \begin{equation}
        \begin{aligned}
            \dot z^{k,i}(t)&=a^k_i(t)z^{k,i}(t)+b^k_i(t)z^{k,i+1}(t),\\
            \dot z^{k,N}(t)&=a^k_N(t)z^{k,N}(t)+b^k_N(t)z^{k,0}(t-\tau(x^k_t))
        \end{aligned}\label{eq:z}
    \end{equation}
    also holds, and we know that
    $$V(z^k_t)>N^*$$
    and 
    \begin{equation}\label{zk<2}
        \|z^k_t\|<2 \quad \text{for all } t\le0. 
    \end{equation}
    Moreover, $\|z^k_0\|=1$. Hence, the sequence $(z^k)_{k=0}^\infty$ is uniformly bounded, and system \eqref{eq:z}, together with the uniform boundedness of $a^k_i$ and $b^k_i$, implies that  $(z^k)_{k=0}^\infty$ is also equicontinuous. Therefore, by the (classical) Arzel\`a--Ascoli theorem and Cantor's diagonalization argument, we obtain a subsequence $(z^{k_l})_{l=0}^\infty$ and a continuous function $z\colon(-\infty,0]\to\R$ such that
    \begin{align}
        z^{k_l}(t)\to z(t),\quad\text{and}\quad k_l\to\infty\quad\text{as }l\to\infty\label{lim_z_k_l}
    \end{align}
    for all $t\le 0$, and the convergence in \eqref{lim_a_b_tau} is uniform on any compact subset of $(-\infty,0]$. This, together with system \eqref{eq:z} and \eqref{lim_z_k_l}, implies that $z$ is differentiable, $\dot z^{k_l}\to\dot z$ as $l\to\infty$, and the convergence is uniform on compact subsets of $(-\infty,0]$. Furthermore, $z$ satisfies system \eqref{linearized_eq} on the interval $(-\infty,0]$.

    From \eqref{z-def} and \eqref{zk<2} we obtain $\|z_t\|\le2$ for all $t\le 0$, and $\|z_0\|=1$, so $z$ is a nontrivial solution of the constant delay system \eqref{linearized_eq}, that is bounded on $(-\infty,0]$. Thus, \cite[Proposition~3.8]{garab:20} implies $V^{\pm}(z_t,-\tau(\0))\le N^*$ ($V^+$ or $V^-$ in case $\delta=+ 1$ or $\delta=-1$, respectively) for all $t\le 0$.

    Note that thanks to \ref{v_cont} (with constant delay, see \cite[Theorem~3.4]{balazs:garab:25}), there exists $T<0$, such that $z_T\in\cR_{-\tau(\0)}$. 

    Finally, using the local uniform $C^1$-convergence of $(z^{k_l})_{l=0}^\infty$ and \ref{v_cont}, we conclude that
    \begin{equation*}
        N^*<\lim_{l\to\infty}V(x^{k_l}_T)=\lim_{l\to\infty}V^{\pm}(z^{k_l}_T,-\tau(\0))=V^{\pm}(z_T,-\tau(\0))\le N^*,
    \end{equation*}
    which is a contradiction.
    \medskip

    (b) As already mentioned, we need a uniform growth bound to guarantee the boundedness of  the auxiliary variables $z^k$ defined by \eqref{z-def}. We achieve this by applying \cite[Proposition~2.8]{garab:20} multiple times, and \cite[Lemma~2.9]{garab:20}). This requires that the solutions we deal with are defined on an extended interval. For this purpose, let $m$ be fixed such that $m\tau(\0)\geq 2r $ and $\varrho\coloneq -(N^\ast + 4 + m)\tau(\0)$.
    
    We proceed again by contradiction. Assume that there exists a sequence $(\varphi_k)_{k=1}^\infty\subset\A\setminus\{\0\}$ and an entire solution $x^k$ through $\varphi_k$ such that
    \begin{equation}
        \sup_{t\in[\varrho,\infty)}\|x^k_t\|\to 0\quad\text{as }k\to\infty,\label{sup_x}
    \end{equation}
    and $V(x^k_{t_k})<N^*$ for some $t_k\in\R$. Using \ref{delay_const_near_0}, we may assume that 
    \begin{equation}
        \sup_{t\in[\varrho,\infty)}\|x^k_t\|<\eps \label{sup_x_eps}
    \end{equation}
    for all $k\in \N$. Similarly to part (a), we may also assume that 
    \begin{equation}\label{z<2}
        2\|x^k_0\| > \sup_{t\geq 0}\|x_t^k\|
    \end{equation}
    and $V(x^k_{\varrho})<N^\ast$ for all $k$. Then the monotonicity of $V$ also implies $V(x^k_t)<N^\ast$ for all $t\geq \varrho$.
    
    Just as in the first part of the proof, we now obtain that $x^k$ solves the DDE 
    \begin{alignat*}{3}
        \dot x^{k,i}(t)&=a^k_i(t)x^{k,i}(t)+b^k_i(t)x^{k,i+1}(t),&\qquad& i\in\{0,\dots, N-1\},\\
        \dot x^{k,N}(t)&=a^k_N(t)x^{k,N}(t)+b^k_N(t)x^{k,0}(t-\tau(\0)),&&
    \end{alignat*}
    with constant delay for all $t\ge \varrho$, where $a^k_i$ and $b^k_i$ are defined by \eqref{def-a-b}, $a^k_i$ and $b^k_i$ are uniformly bounded, and $b^k_i$ are uniformly bounded away from $0$. Applying \cite[Proposition~2.8]{garab:20} $m$~times (after rescaling time by a factor $\tau(\0)$), and  \cite[Lemma~2.8]{garab:20}, we can derive that there exists $C \geq 1$ such that
    \begin{equation}
        \|x^k_{t}\|\le C^{m+1}\|x^k_{t_0}\|\quad\text{for } t\in [t_0-r , t_0]\label{xk_norm_t0}
    \end{equation}
    for all $t_0\geq 0$.
    
    As in part (a), the function $z^k$, defined by \eqref{z-def},
    satisfies the linear constant delay system
    \begin{alignat}{3}\label{z-bontas}
        \dot z^{k,i}(t)&=a^k_i(t)z^{k,i}(t)+b^k_i(t)z^{k,i+1}(t),&\qquad& i\in\{0,\dots ,N-1\},\\
        \dot z^{k,N}(t)&=a^k_Nz^{k,N}(t)+b^k_{k,N}z^{k,0}(t-\tau(\0)).&&
    \end{alignat}
    for all $k\in\N$ and $t\geq \varrho$.

   From \eqref{z<2} and \eqref{xk_norm_t0}, we obtain 
   $$\|z^k_t\|\leq \max\{2,C^{m+1}\}$$
   for all $k\in \N$ and $t\geq t-r $, so $\dot{z}^k$ is also uniformly bounded on $[0,\infty)$. This implies that functions $z^k$ are also equicontinuous, and we can deduce just as in part (a) that there exist locally uniformly convergent subsequences
    \[z^{k_l} \to z,\quad \mbox{and}\quad \dot{z}^{k_l} \to \dot{z}.\]
    It follows that $z$ satisfies the linear equation \eqref{linearized_eq} on $[0,\infty)$. Since $\|z^{k_l}_0\|=1$ for all $l\in \N$, we have $\|z_0\| = 1$. Therefore, $z$ is a nontrivial solution of equation \eqref{linearized_eq} that is bounded on $[0,\infty)$, and we can apply \cite[Lemma 2.7]{garab:20} to obtain that $V^\pm(z_t,-\tau(\0)]) \geq N^\ast$ for all $t\in [0,\infty)$, and by the monotonicity of $V^\pm$, this also holds for all $t \in \R $.
    
    Finally, we obtain the existence of some $T> 0$, such that $z_T \in \cR_{-\tau(\0)}$. Then, using the local uniform $C^1$-convergence of $(z^{k_l})_{l=0}^\infty$, we arrive at 
    \begin{equation*}
        N^\ast > \lim_{l \to \infty} V(x^{k_l}_T) = \lim_{l \to \infty} V^{\pm}(z^{k_l}_T,-\tau(\0))= V^{\pm}(z_T,-\tau(\0)) \geq N^\ast,
    \end{equation*}
    which is a contradiction.
\end{proof}

\subsection{Relation to other results and possible generalizations}\label{subsec:covers-prev-results}
Let us briefly demonstrate that several similar results on delay differential and difference equations (i.e. \cite{polner:02, mallet-paret:88-morse, garab:poetzsche:19, garab:20,bartha:garab:krisztin:25}) on the existence of Morse decompositions can be obtained as special cases of \cref{th:morse}. 

\subsubsection*{Application to delay difference equations}

Let us consider the difference equation
\[x_{k+1}=f(x_k,x_{k-n}),\]
where $f\in C^1(\R^2,\R)$ fulfills some feedback conditions as in \cite[(${\bf H}_{\bf 1})$--(${\bf H}_{\bf 3})$]{garab:poetzsche:19}, and assume that it has a global attractor -- conditions guaranteeing this can be found in \cite[Section 5]{garab:poetzsche:19} or in \cite{garab:18}.

Let $\T\coloneq \Z$, $\widetilde{\X}\coloneq \X\coloneq \R^{n+1}$, $\O \coloneq  \{(0,\dots,0)\}$, and $d=\widetilde{d}$ be the Euclidean distance. Moreover, let the discrete Lyapunov function $V$ (counting the number of sign changes in ($n+1$)-dimensional vectors) and $\cR$ be defined as in \cite[pp.\ 909--910]{garab:poetzsche:19}. Note that  \ref{cond-o-exists}--\ref{att_d} are trivially satisfied. 

Furthermore, \cite[Proposition 3.2]{garab:poetzsche:19} yields conditions \ref{v_usc} and \ref{v_cont}, \cite[Theorem 3.3]{garab:poetzsche:19} guarantees conditions \ref{v_dec} and \ref{v_r} (with $T=4n+2$), while Proposition 4.3 of the same paper ensures that \ref{v_n*} holds (where $N^\ast$ is defined on p.\ 914, and is closely related to the number of unstable eigenvalues of the linearized equation).

Thus, \cref{th:morse} applies and yields a Morse decomposition of the global attractor. Moreover, choosing $N_0$ large enough (namely, $N_0>n+1$) trivially yields that $\S_{N_0}^+$ is the empty set, and in that case the Morse decomposition given by \cref{th:morse} and by \cite[Theorem 4.1]{garab:poetzsche:19} coincide.

\subsubsection*{Application to cyclic DDEs with constant delay}
System \eqref{eq:sdx} with a constant delay was studied in \cite[Section~3]{garab:20}, where a Morse decomposition was also constructed (see Theorem~3.2 therein), generalizing the corresponding results of Mallet-Paret \cite{mallet-paret:88-morse} and Polner \cite{polner:02} for scalar equations. Although condition \ref{delay_const_near_0} is restrictive, it certainly allows constant delays, so \cref{thm:sdde} can be applied. 

It is worth noting that in \cite{garab:20}, boundedness of the corresponding Lyapunov function was also proved, and a Morse set like $\S_{N_0}^+$ was unnecessary in the construction. Now, if we choose $N_0$ larger than this upper bound, then \cref{thm:sdde} gives exactly the same Morse decomposition as in \cite{garab:20}.

\subsubsection*{Scalar SDDEs with more general delays}

The proof of \cref{thm:sdde} uses condition \ref{delay_const_near_0} only to verify \ref{v_n*}. The latter condition is shown in a recent paper \cite[Lemma~4.4]{bartha:garab:krisztin:25} for the scalar case with negative feedback, assuming only boundedness and Lipschitz continuity of $\tau$, and strict monotonicity of the delayed argument. This means that -- using the same notation as in \cref{subsec:sdde} -- its main theorem (Theorem~4.2) is also a special case of \cref{th:morse}.

\subsubsection*{Possible generalizations}
We conjecture that \cref{thm:sdde} can be extended to  cyclic SDDE systems under more general assumptions on the delay, similarly to the scalar case \cite[Theorem~4.2]{bartha:garab:krisztin:25}. As demonstrated above, the only missing link is to prove condition \ref{v_n*}. This boils down to showing that for any solution $x$ with $V(x_t)$ less than a fixed value, there exists a uniform exponential growth bound backward in time (cf.\ inequality \eqref{xk_norm_t0}).  This would also imply that nontrivial solutions with finitely many sign changes on any bounded interval are not superexponential (i.e.\ they do not tend to zero faster than exponentially). Combining this with the results of \cite{balazs:garab:25} (Theorem~3.9 and Section~4.1), one obtains that there are no superexponential solutions on the global attractor. Since the mentioned results of \cite{balazs:garab:25} apply for a wider class of state-dependent delays, it would be desirable to show such growth bounds in this general setting. This seems nontrivial; however, one promising way forward may be to combine the proofs of \cite[Proposition 2.8]{garab:20}, carried out for cyclic DDEs with constant delay, and \cite[Lemmas~3.3--3.4]{bartha:garab:krisztin:25}, which deals with scalar SDDEs. This can serve as a topic of future work.

\section*{Acknowledgment}
This research was supported by the National Research, Development and Innovation (NRDI) Fund, Hungary, [project no.\ TKP2021-NVA-09 and FK~142891], and by the National Laboratory for Health Security [RRF-2.3.1-21-2022-00006]. I.~B. was supported by NRDI no.~FK~138924. Á.~G.\ was supported by the János Bolyai Research Scholarship of the Hungarian Academy of Sciences.

\section*{Declarations}

\begin{itemize}
\item Conflict of interest: The authors declare no competing interests.
\item Ethical Approval: Not applicable.
\item Consent to Participate: Not applicable.
\item Consent to Publish: Not applicable.
\end{itemize}

\printbibliography

\end{document}